\author{E. Assing}
\title{A note on Sarnak's density hypothesis for $\textrm{Sp}_4$}
\theoremstyle{plain}
\newtheorem{lemma}{Lemma}[section]
\newtheorem{theorem}[lemma]{Theorem}
\newtheorem{conjecture}[lemma]{Conjecture}
\newtheorem{remark}[lemma]{Remark}
\DeclareMathOperator{\R}{\mathbb{R}}
\DeclareMathOperator{\A}{\mathbb{A}}
\DeclareMathOperator{\Q}{\mathbb{Q}}
\DeclareMathOperator{\Z}{\mathbb{Z}}
\DeclareMathOperator{\PGSp}{\text{PGSp}_4}
\DeclareMathOperator{\GSp}{\text{GSp}_4}
\DeclareMathOperator{\Sp}{\text{Sp}_4}
\begin{document}

\thanks{The author is supported by the Germany Excellence Strategy grant EXC-2047/1-390685813 and also partially by the DFG – Project-ID 491392403 – TRR 358}
\subjclass[2020]{Primary:  11F72, 11F46, 11F70}
\keywords{Exceptional eigenvalues, density hypothesis, paramodular forms, Arthur packets}

\setcounter{tocdepth}{2}  \maketitle 

\begin{abstract}
In this note we prove Sarnak's (spherical) density hypothesis for the full discrete spectrum of the quotients $\Gamma_{\textrm{pa}}(q)\backslash \textrm{Sp}_4(\mathbb{R})$, where $\Gamma_{\textrm{pa}}(q)$ are paramodular groups with square-free level $q$. To derive this estimate we upgrade a density estimate established via the Kuznetsov formula, which only accounts for the generic part, using Arthur's parametrization of the discrete spectrum.
\end{abstract}

\section{Introduction}

The generalized Ramanujan conjecture is a central and very difficult problem in the theory of automorphic forms. For ${\rm GL}_n$ it predicts that all cuspidal representations are tempered. It has been known for a long time that this does not directly generalize to other groups, see \cite{Sa4}. The known counter examples to the naive Ramanujan conjecture are all cuspidal associated to parabolics (in short CAP). The correct generalized Ramanujan conjecture now states that all cuspidal non CAP forms are tempered. But even for ${\rm GL}_2$, where no CAP forms appear, the Ramanujan conjecture is still out of reach in general. Therefore, in practice, one often tries to replace the full conjecture by suitable approximations. One such approximation is Sarnak's density hypothesis, which predicts in a precise quantitative way that exceptions to the generalized Ramanujan conjecture are rare. This density hypothesis has the advantage that it can be formulated for the full discrete spectrum and it is expected to hold uniformly in great generality. In particular, it includes both CAP forms and residual forms. As we will see below this feature is of particular importance in the context of this note.

\subsection{The main result}

Before continuing our discussion let us formulate the spherical density hypothesis for ${\rm Sp}_4$. Let $M>2$ be a (large) parameter and $\Gamma\subseteq {\rm Sp}_4(\mathbb{Q})$ be a congruence lattice. To each eigenform of all invariant differential operators in $L^2_{\rm disc}(\Gamma\backslash \mathbb{H}_2)$ we associate a spectral parameter $\mu_{\varpi}=(\mu_{\varpi}(1),\mu_{\varpi}(2))\in \mathbb{C}^2$. Let $\mathcal{F}_{\Gamma}(M)$ denote a maximal orthogonal family of eigenforms $\varpi$ with $\Vert \mu_{\varpi} \Vert\leq M$. The restriction of the spectral parameter to a ball of radius $M$ is important to make the set $\mathcal{F}_{\Gamma}(M)$ finite. Note that the constant function $\mathbf{1}$ has spectral parameter
\begin{equation}
	\mu_{\mathbf{1}}=(\frac{3}{2},\frac{1}{2}) \nonumber
\end{equation}
and is always in $\mathcal{F}_{\Gamma}(M)$. We have normalized the spectral parameters so that an eigenform $\varpi$ is tempered if and only if $\mu_{\varpi}\in (i\mathbb{R})^2$. Thus we introduce
\begin{equation}
	\sigma_{\varpi} = \max_{j=1,2}(\vert\Re(\mu_{\varpi}(j))\vert) \nonumber
\end{equation}
to measure how far an eigenfunction is from being tempered. Possible exceptions of badness $\sigma$ are counted by
\begin{equation}
	N_{\Gamma}(\sigma;M) = \sharp\{\varpi \in \mathcal{F}_{\Gamma}(M)\colon \sigma_{\varpi}\geq\sigma \}.\nonumber
\end{equation}
The two extreme cases are
\begin{equation}
	N_{\Gamma}(0;M) = \sharp \mathcal{F}_{\Gamma}(M) \text{ and }N_{\Gamma}(\sigma_{\mathbf{1}};M) = 1.\nonumber
\end{equation}
Interpolating these two bounds leads to the density hypothesis
\begin{equation}
	N(\sigma;M) \ll \sharp\mathcal{F}_{\Gamma}(M)^{1-\frac{\sigma}{\sigma_{\mathbf{1}}}}.\nonumber
\end{equation}
We will be mainly interested in varying the lattice $\Gamma$ through some suitable level family and will therefore ignore the dependency on $M$ confining ourselves to the weaker hypothesis
\begin{equation}
	N_{\Gamma}(\sigma;M) \ll_{M,\epsilon} {\rm Vol}(\Gamma\backslash\mathbb{H}_2)^{1-\frac{\sigma}{\sigma_{\mathbbm{1}}}+\epsilon}. \label{density_hypothesis}
\end{equation}
Our main result is Theorem~\ref{th:detail_main} below. It establishes the density hypothesis and more for the paramodular group $\Gamma_{\textrm{pa}}(q)$ as defined in \eqref{para_def} below. We can summarize the most important part of the statement as follows:

\begin{theorem}\label{main_theorem}
For $q$ square-free, we have
\begin{equation}
	N_{\Gamma_{\textrm{pa}}(q)}(\sigma;M) \ll_{M,\epsilon} {\rm Vol}(\Gamma_{\textrm{pa}}(q)\backslash\mathbb{H}_2)^{1-\frac{\sigma}{\sigma_{\mathbbm{1}}}(1+\frac{1}{2})+\epsilon}+1. \label{main_th_eq}
\end{equation}
\end{theorem}

\subsection{The methods}

The proof of Theorem~\ref{main_theorem} has two main steps. First, a density result for generic forms is proved. The main global tool here is a Kuznetsov type formula for $\textrm{Sp}_4$, which has the restriction to generic forms built in by default. A similar argument has appeared in \cite{Man} and we follow it closely.\footnote{Unfortunately the argument in \cite{Man} seems to have a gap. We explain this in more detail in Remark~\ref{mistake} below.}  Second, we have to account for the non-generic forms manually. This is done using Arthur's classification of the discrete spectrum predicted in \cite{Ar1}. We will see that non-generic non-CAP cusp forms (i.e. those cusp forms that are expected to be tempered) are covered by the generic density result. The remaining task is to handle the contribution of CAP forms. These are treated by carefully studying the associate Arthur packets, which were computed explicitly in \cite{Sc2}. 

It should be mentioned that our approach to use Arthur's classification to control the non-generic spectrum is not completely new. Indeed, similar ideas were used in \cite{MS} and later also in \cite{EGG}.

Note that one might have hoped to transfer the density estimates directly from ${\rm GL}_4$ to ${\rm Sp}_4$. However, since the image of the transfer from ${\rm Sp}_4$ is self-dual and the Plancherel-Density drops when restricted to self-dual forms, this turns out to be difficult. The latter phenomenon was worked out in \cite{Ka}.

\subsection{Further discussion}

The density hypothesis in rank one has been a hot topic in the 90's and is relatively well understood. We refer to \cite{Hum, Hux, Iw, Sa1} and the references within for a more in depth discussion of the matter. In higher rank recently breakthroughs have been obtained for ${\rm GL}_n$, where \eqref{density_hypothesis} has been established for the standard Hecke congruence subgroup, the principal congruence subgroup and a Borel-Type congruence subgroup, see \cite{Bl, AB, A}. Note that even though we have formulated the density hypothesis only for ${\rm Sp}_4$, it should be clear how to adapt this formulation to other groups. For groups of higher rank different from $\textrm{GL}_n$ strong density results are still sparse. Following partial progress in the case of ${\rm Sp}_4$ made in \cite{Man} our result is, to the best of our knowledge, the first spherical density theorem covering the full discrete spectrum beyond $\textrm{GL}_n$. We hope that the strategy of combining a generic density hypothesis, obtained using a version of the Kuznetsov formula, with the endoscopic classification will generalize to a wider class of groups.

We conclude the introduction by briefly pointing towards some interesting points:

\begin{itemize}
	\item We have made no attempt of explicating how the implicit constant in \eqref{main_th_eq} depends on $M$. A close analysis of the argument along the lines of \cite{AB} will show that this dependency is polynomial, but obtaining a reasonable exponent would require more work. 
	
	\item The assumption that $q$ is square-free enters only in the first step of our proof. Indeed, when setting up the Kuznetsov formula we make use of \cite{CI} to handle the spectral side. This is where square-freeness is required. The general case would follow from a conjecture of Lapid-Mao \cite{LM} (see also Conjecture~\ref{conj} below) together with the corresponding local estimates \eqref{con_2}. Note that the conjecture of Lapid and Mao is known for tempered representations due to recent work by M. Furusawa and K. Morimoto, see \cite[Theorem~6.3]{FM}. Unfortunately one would need the conjecture precisely for (the conjecturally empty set of) generic non-tempered forms.
	
	\item When comparing the estimate given in Theorem~\ref{main_theorem} with Sarnak's density hypothesis as stated in \eqref{density_hypothesis} we observe that there is an improvement of $\frac{1}{2}$ in the exponent. This breach of the density barrier has its roots in the estimate for the generic contribution, which is obtained using the Kuznetsov formula. The relevant part of the argument is carried out in Section~\ref{sec:gen} below. One can not expect to obtain further improvements in the density theorem as stated in \eqref{main_th_eq}, because the estimate is sharp at $\sigma=\frac{1}{2}$, where CAP-forms of Saito-Kurokawa type contribute. However, one can expect further improvements in the exponent on the generic part of the spectrum. This requires non-trivial estimates for Kloosterman sums and we will explore this feature for prime $q$. See Theorem~\ref{th:gen_ests} and Theorem~\ref{th:detail_main} below for details.
	
	\item Other approximations to the generalized Ramanujan conjecture are absolute upper bounds on $\sigma_{\varpi}$ for non-CAP forms $\varpi$. In the case of ${\rm Sp}_4$ these can be obtained by exploiting the transfer to ${\rm GL}_4$ established in \cite{Ar1, Ar2} (or \cite{AS} in the generic case) and applying bounds towards the Ramanujan conjecture on $\textrm{GL}_4$. The latter are well recorded in \cite{BB} for example. The CAP forms of $\Sp$ all satisfy $\sigma_{\varpi} = \frac{1}{2}$, so that an in principle non negligible part of the spectrum is heavily non-tempered. However, these forms are still covered by Theorem~\ref{main_theorem}. This is not only convenient for possible applications, but also shows how robust Sarnak's density hypothesis is.
	
	\item There are different measures for the non-temperedness of $\varpi$, which lead to alternative formulations of the density hypothesis. In general not all of these are equivalent. For applications where the pre-trace formula is used directly our formulation works quite well. Examples that fall in this category are lattice point counting problems and optimal lifting. See \cite{JK} or \cite[Section~1.3 and Section~8]{AB} for related discussion in the context of $\textrm{GL}_n$. Another popular formulation is in terms of $p(\varpi)$, which is the infimum over real numbers $p\geq 2$ such that the matrix coefficient of $\varpi$ is in $L^p(\textrm{Sp}_4(\R))$. It is this invariant, which was used by Sarnak and Xue to formulate their multiplicity conjecture, see \cite{SX}. This aspect is considered in \cite{EGG} for spaces closely related to ours. One can deduce the value of $p(\varpi)$ from the information contained in $\Re(\mu_{\varpi})$, but in general not from $\sigma_{\varpi}$ alone. See \cite[Lemma~3.2]{GGN} for example.

	\item Our arguments apply in principle to other lattice families, but some difficulties arise in general. Indeed, the paramodular group is convenient for several reasons. First, we have the the results from \cite{CI} available. This makes the generic estimated unconditional for square-free levels. Second, as shown in \cite{Sc1, Sc2, RS} for example, the level structure given by the paramodular group has many nice properties. In particular, it harmonizes with Arthur's classification. That said, we will take the liberty to comment on features that arise for other lattice families once in a while.
\end{itemize}

\section{Notation and Preliminaries}

As the title suggests we are interested in establishing (a version of) Sarnak's density hypothesis for $\Sp$. As the mostly classical discussion in the introduction reveals we are particularly interested in the spectrum of (congruence) quotients of the form $\Gamma\backslash \mathbb{H}_2$ for $\Gamma\subseteq \Sp(\Q)$. However, in order to establish the desired result it will be useful to work adelically. It turns out that in the latter setting it is often more convenient to work with $\GSp$ or $\PGSp$ instead of $\Sp$. Therefore it will be important to introduce all these groups as well as some related objects.

Conventions differ when dealing with symplectic groups. We will closely follow the set up in \cite{Man}. 

\subsection{Matrices and subgroups}

We set 
\begin{equation}
	J= \left(\begin{matrix} 0&0&1&0 \\0&0&0&1\\ -1&0&0&0\\ 0&-1&0&0\end{matrix}\right).\nonumber
\end{equation}
The general symplectic group over a ring $R$ (with unit) is given by
\begin{equation}
	G(R) = {\rm GSp}_4(R) = \{g\in{\rm Mat}_4(R)\colon g^{\top}Jg=\mu \cdot J \text{ for }\mu=\mu(g)\in R^{\times} \}.\nonumber
\end{equation}
The map $g\mapsto \mu(g)$ is called the multiplier homomorphism. The special symplectic group is denoted by $G_0(R) = {\rm Sp}_4(R)$ and consists of those $g\in G(R)$ with $\mu(g)=1$. Furthermore we set $G(\mathbb{R})^+ = \{g\in G(\mathbb{R})\colon \mu(g)>0 \}.$

Our choice of maximal torus is
\begin{equation}
	T(R) = \{ {\rm diag}(x,y,\mu\cdot x^{-1}, \mu\cdot y^{-1})\colon x,y,\mu\in R^{\times}  \} \nonumber
\end{equation}
and we put $T_0(R) =T(R) \cap {\rm Sp}_4(R)$. For later reference we write $t(x,y) = {\rm diag}(x,y,x^{-1},y^{-1})$. A useful variant is given by 
\begin{equation}
	c=(c_1,c_2) \mapsto c^{\ast} = t\left(\frac{1}{c_1},\frac{c_1}{c_2}\right). \nonumber
\end{equation}

Over the real numbers the maximal compact subgroup of is given by $K_{\infty}={\rm O}_4(\mathbb{R})\cap G(\mathbb{R})$. The classical Siegel upper half space (of degree $2$) can be identified with
\begin{equation}
	\mathbb{H}_2 = G_0(\mathbb{R})/(K_{\infty}\cap G_0(\mathbb{R})). \nonumber
\end{equation} 
We will further need the following special coordinates in the real case. We first define the embedding $\iota\colon \mathbb{R}_+^2\to T_0(\mathbb{R})$ by 
\begin{equation}
	\iota(y) = t(y_1y_2^{\frac{1}{2}},y_2^{\frac{1}{2}}). \nonumber
\end{equation}
The inverse of $\iota$ will be denoted by ${\rm y}\colon T_0(\mathbb{R}_+) \to \mathbb{R}_+^2$. We extend ${\rm y}$ to $G_0$ via the Iwasawa decomposition $G_0(\mathbb{R}) = U(\mathbb{R})T_0(\mathbb{R}_+)(K_{\infty}\cap G_0)$. 

Next we define some unipotent matrices:
\begin{equation}
	n(x) = \left(\begin{matrix} 1&x&0&0\\ 0&1&0&0\\0&0&1&0\\0&0&-x&1\end{matrix}\right) \text{ and } s\left(\left(\begin{matrix} a& b \\ b & c \end{matrix}
	\right)\right) = \left(\begin{matrix}1&0&a&b \\0&1&b&c\\0&0&1&0\\0&0&0&1\end{matrix}\right).\nonumber
\end{equation}
The Borel subgroup is given by
\begin{equation}
	B=TU=UT \text{ with }U(R) = \left\{ n(x)\cdot s(M)\colon x\in R \text{ and }M\in {\rm Sym}_{2\times 2}(R) \right\}.\nonumber
\end{equation}
This is the standard minimal parabolic subgroup. 

Let $\Gamma_0={\rm Sp}_4(\mathbb{Z})$ and define the paramodular group of level $q$ by
\begin{equation}
	\Gamma_{\rm pa}(q) = \left[\begin{matrix}\mathbb{Z} & \mathbb{Z} & q^{-1}\mathbb{Z} & \mathbb{Z} \\ q\mathbb{Z}&\mathbb{Z}&\mathbb{Z}& \mathbb{Z} \\ q\mathbb{Z}&q\mathbb{Z}&\mathbb{Z}&q\mathbb{Z}\\q\mathbb{Z}&\mathbb{Z}&\mathbb{Z}&\mathbb{Z}\end{matrix} \right]\cap {\rm Sp}_4(\mathbb{Q}).\label{para_def}
\end{equation}
Note that this is not a subgroup of $\Gamma_0$, but $U(\mathbb{Z})\subseteq \Gamma\cap U(\mathbb{Q})$. The latter is not an equality and the difference is measured by the index
\begin{equation}
	\mathcal{N}(\Gamma_{\rm pa}(q)) = [\Gamma_{\rm pa}(q)\cap U(\mathbb{Q})\colon U(\mathbb{Z})]=q.\label{eq:N_for_para}
\end{equation}
While our main results concern only the paramodular group $\Gamma_{\rm pa}(q)$, we will often discuss general aspects of the argument for arbitrary lattices $\Gamma\subseteq \Sp(\Q)$. This allows us to comment on certain interesting features that arise for the principal congruence subgroup or other parahoric lattices. We write $X_{\Gamma}$ for the classical quotient
\begin{equation}
	X_{\Gamma} = \Gamma\backslash \mathbb{H}_2.\nonumber
\end{equation}

When working adelically we will encounter a certain open compact subgroup as counterparts to the lattice $\Gamma$. Recall that strong approximation for $G$ states that
\begin{equation}
	G(\mathbb{A}) = G(\mathbb{Q})\cdot (G(\mathbb{R})^+\times K) \nonumber
\end{equation}
where $K\subseteq G(\mathbb{A}_{\rm fin})$ is an open compact subgroup such that
\begin{equation}
	\mu\colon K\to \widehat{\mathbb{Z}}^{\times} \nonumber
\end{equation}
is surjective. We thus define the groups $K_{\Gamma}$ by requiring
\begin{equation}
	\Gamma = G(\mathbb{Q})\cap G(\mathbb{R})^+\times K_{\Gamma}. \nonumber
\end{equation}
Then $K_{\Gamma} = \prod_p K_{\Gamma,p}$ and assume that $K_{\Gamma,p}\subseteq G(\mathbb{Q}_p)$ is an open compact subgroup. For the paramodular group it is easy to write down these groups explicitly:
\begin{equation}
	K_{\Gamma_{\rm pa}(q),p} = \left[\begin{matrix}
	\mathbb{Z}_p & \mathbb{Z}_p & p^{-v_p(q)}\mathbb{Z}_p & \mathbb{Z}_p \\ p^{v_p(q)}\mathbb{Z}_p&\mathbb{Z}_p&\mathbb{Z}_p& \mathbb{Z}_p \\ p^{v_p(q)}\mathbb{Z}_p&p^{v_p(q)}\mathbb{Z}_p&\mathbb{Z}_p&p^{v_p(q)}\mathbb{Z}_p\\ p^{v_p(q)}\mathbb{Z}_p&\mathbb{Z}_p&\mathbb{Z}_p&\mathbb{Z}_p	\end{matrix}\right]\cap G(\mathbb{Q}_p).\nonumber
\end{equation}
In general one sees that there is a square-free integer $D_{\Gamma}$ so that $K_{\Gamma,p} = G(\Z_p)$ for all $p\mid D_{\Gamma}$. For lack of a better name we refer to $D_{\Gamma}$ as the discriminant of $\Gamma$. Note that if $\Gamma = \Gamma_{\rm pa}(q)$, then $D_{\Gamma} = \textrm{rad}(q)$. Of course one has $D _{\Gamma_0} = 1$.

Note that, if not explicitly stated otherwise, then rational matrices (or scalars) will always be embedded diagonally in the corresponding adelic spaces. On the other hand we will use the notation $\iota_{\infty}\colon G(\mathbb{R})\to G(\mathbb{A})$ for the embedding $g\mapsto (g,1,\ldots)$. This map will play a distinguished role in the adelisation process of automorphic forms and therefore deserves a special name.

The two simple roots of $G_0$ are given by
\begin{equation}
	\alpha(t(x,y))=xy^{-1} \text{ and } \beta(t(x,y))=y^2.\nonumber
\end{equation}
The positive roots are then $\Sigma_+ = \{\alpha,\beta,\alpha+\beta,2\alpha+\beta\}$.

We denote the Weyl group by $W$. It is given by
\begin{equation}
	W=\{ 1,s_1,s_2,s_1s_2, s_2s_1, s_1s_2s_1, s_2s_1s_2, s_1s_2s_1s_2 \},\nonumber
\end{equation}
for 
\begin{equation}
	s_1=\left(\begin{matrix} 0& 1 & 0 & 0\\ -1 & 0 & 0 & 0\\ 0&0&0&1\\0&0&-1&0\end{matrix}\right) \text{ and } s_2= \left(\begin{matrix} 1 & 0&0&0\\ 0&0&0&1\\0&0&1&0\\0&-1&0&0\end{matrix}\right).\nonumber
\end{equation}
Note that $s_1$ (resp. $s_2$) is the simple root reflection corresponding to $\alpha$ (resp. $\beta$). We set $U_w=w^{-1}U^{\top}w\cap U$ and $\overline{U}_w = w^{-1}Uw\cap U$. Further we will write ${}^wy = {\rm y}[w\iota(y)^{-1}w^{-1}]$ for $y\in \mathbb{R}_2^+$.

\subsection{Measures and characters}

We first equip $\mathbb{R}$ with the usual Lebesgue measure. The fields $\mathbb{Q}_2,\mathbb{Q}_3,\ldots$ are equipped with the additive Haar measure normalized by ${\rm Vol}(\mathbb{Z}_p)=1$. These give rise to the product measure on $\mathbb{A}$, which by strong approximation satisfies ${\rm Vol}(\mathbb{Q}\backslash \mathbb{A}) = 1$. If $R$ is one of these rings, we put the Tamagawa measure on $U(R)$. It is normalised by ${\rm Vol}(U(\mathbb{Q})\backslash U(\mathbb{A}))=1$. Furthermore, the measures on $U$ naturally factor through $U=U_w\cdot \overline{U}_w$.

For $\alpha\in \mathbb{C}^2$ and $y\in \mathbb{R}_+^2$ we employ the usual notation $y^{\alpha}=y_1^{\alpha_1}\cdot y_2^{\alpha_2}$. Important will be the exponent $\eta=(2,\frac{3}{2})$, which is used to define the measure
\begin{equation}
	d^{\ast}y = y^{-2\eta}\frac{dy_1}{y_1}\cdot \frac{dy_2}{y_2} \nonumber
\end{equation}
on $\mathbb{R}_+^2$. We use $\iota$ to push this measure to $T_0(\mathbb{R}_+)$. Finally we equip $K_{\infty}$ with the Haar measure such that ${\rm Vol}(K_{\infty}\cap G_0(\mathbb{R}))=1$. The Iwasawa decomposition allows us to define a Haar measure on $G_0(\mathbb{R})$ by
\begin{equation}
	\int_{G_0(\mathbb{R})}f(g)dg = \int_{K_{\infty}\cap G_0(\mathbb{R})}\int_{T_0(\mathbb{R}_+)}\int_{U(\mathbb{R})} f(u\iota(y)k)dud^{\ast}ydk.\nonumber
\end{equation}
This descends to the usual measure $dud^{\ast}y$ on $\mathbb{H}_2$. To make the comparison of different measures easier we recall that
\begin{equation}
	{\rm Vol}(G_0(\mathbb{Z})\backslash G_0(\mathbb{R}),dg) = \frac{\zeta(2)\zeta(4)}{2\pi^3}.\nonumber
\end{equation}
See for example \cite[Proposition~A.3]{KL}. We can use this measure to define a measure on ${\rm PGSp}_4(\mathbb{R})$ and to normalize $G(\R)$ in the obvious way.

At the finite places we normalize the Haar measure on $Z(\mathbb{Q}_p)\backslash G(\mathbb{Q}_p)$ and $G(\mathbb{Q}_p)$ so that 
\begin{equation}
	{\rm Vol}((Z(\mathbb{Q}_p)\cap G(\mathbb{Z}_p))\backslash G(\mathbb{Z}_p),d\overline{g}) = 1 = {\rm Vol}(G(\mathbb{Z}_p),dg).\nonumber
\end{equation}
(Note that these normalizations are compatible if we equip $\mathbb{Q}_p^{\times}$ with the Haar measure satisfying ${\rm Vol}(\mathbb{Z}_p^{\times},d^{\times}x)=1$.) By \cite[Lemma~3.3.3]{RS} we have
\begin{equation}
	{\rm Vol}(K_{\Gamma_{\rm pa}(q),p},dg) = q^{-2}(1+q^{-2})^{-1}.\nonumber
\end{equation}

Globally we equip $G(\mathbb{A})$ with the product measure $dg=dg_{\infty}\cdot\prod_pdg_p$. Using strong approximation it is easy to see that with our normalizations we have
\begin{equation}
	{\rm Vol}(Z(\mathbb{A})G(\mathbb{Q})\backslash G(\mathbb{A}),dg) = \frac{\zeta(2)\zeta(4)}{2\pi^3}.\nonumber
\end{equation}
For comparison let us note that the Tamagawa measure on $G(\mathbb{A})$, which we will denote by $d^{\rm ta}g$, is normalized such that 
\begin{equation}
	{\rm Vol}(Z(\mathbb{A})G(\mathbb{Q})\backslash G(\mathbb{A}),d^{\rm ta}g) = 2.\nonumber
\end{equation}
Thus, we must have $d^{\rm ta}g = \frac{4\pi^3}{\zeta(2)\zeta(4)}dg$. 

Given a lattice $\Gamma$ we write
\begin{equation}
	\mathcal{V}(\Gamma) = {\rm Vol}(X_{\Gamma}).\nonumber
\end{equation}
for the co-volume. It turns out that $\mathcal{V}(\Gamma_{\rm pa}(q))\asymp q^2$.

We now fix the standard additive, $\mathbb{Q}$-invariant character $\psi\colon \mathbb{Q}\backslash \mathbb{A}\to S^1$. We have a factorization $\psi=\psi_{\infty}\cdot \prod_p \psi_p$ with $\psi_{\infty}(x) =e(x)$. Also note that all local character $\psi_p$ are trivial on $\mathbb{Z}_p$ but non-trivial on $p^{-1}\mathbb{Z}_p$ (i.e. they are unramified). This gives rise to a character $$\boldsymbol{\psi}\colon U(\mathbb{Q})\backslash U(\mathbb{A}) \to S^1$$ given by
\begin{equation}
	\boldsymbol{\psi}^{(X)}\left(n(x)s\left(\left(\begin{matrix} \ast & \ast \\ \ast & c\end{matrix}\right)\right)\right) = \psi(X_1x+X_2c),\nonumber
\end{equation}
for $X=(X_1,X_2)\in (\mathbb{A}^{\times})^2$. Of course we have the factorization $\boldsymbol{\psi}^{(X)} = \boldsymbol{\psi}_{\infty}^{(X_{\infty})}\cdot \prod_p \boldsymbol{\psi}_p^{(X_p)},$ where the local characters are defined in the obvious way. If $X$ is trivial, we drop it from the notation. The archimedean character $\boldsymbol{\psi}_{\infty}$ on $U(\mathbb{R})$ will play a distinguished role below in our discussion of Kloosterman sums.

\section{Kloosterman sums}\label{sec_KS}

We will now introduce Kloosterman sums following \cite{Man0, Man}, where the relevant theory is worked out explicitly for ${\rm Sp}_4$. A more general discussion can be found in \cite{Da, Dr2}. Throughout this discussion it will be important that the lattice $\Gamma$ satisfies $U(\mathbb{Z})\subseteq \Gamma\cap U(\mathbb{Q})$. Recall that this is true for $\Gamma_{\rm pa}(q)$.

\subsection{Definitions and basic properties}

We start by defining the global Kloosterman set
\begin{equation}
	\mathfrak{X}_{\Gamma}(c^{\ast}w) = U(\mathbb{Z})\backslash [U(\mathbb{Q})wc^{\ast}U_w(\mathbb{Q})\cap \Gamma]/U_w(\mathbb{Z}).\nonumber
\end{equation}

For $M,N\in \mathbb{N}^2$ we call a tuple $(w,c)\in W\times \mathbb{N}^2$ admissible (or more precisely $(M,N)$-admissible) if
\begin{equation}
	\boldsymbol{\psi}_{\infty}^{(M)}(wc^{\ast}x(c^{\ast})^{-1}w^{-1})=\boldsymbol{\psi}_{\infty}^{(N)}(x) \text{ for all }x\in \overline{U}_w.\nonumber
\end{equation}
This is precisely the condition \cite[(4.1)]{Man}, which is termed \emph{well-definedness condition} there. The constraints imposed on $c=(c_1,c_2)$ by the admissibility of $(w,c)$ can be computed by hand. Details can be found in \cite[(4.4)]{Man} or \cite[Section~5.1]{Man0} (and also in slightly different form in \cite[Section~5.4]{Co}). We summarize the constraints on $N,M$ and $c$ arising from admissibility in Table~\ref{tab:my_label}.
\begin{table}
	\centering
	\begin{tabular}{ | c | c | c | c | c | }
		\hline
		$w$ & 1 & $s_1$ & $s_2$ & $s_1s_2$   \\
		\hline 
		$c=(c_1,c_2)$ & $M=N$ &  $N_2=M_2=0$ & $N_1=M_1=0$ & $N_1=M_2=0$ \\
		\hline 
		\hline
		$w$ & $s_2s_1$  & $s_1s_2s_1$ & $s_2s_1s_2$ & $s_1s_2s_1s_2$ \\
		\hline
		$c=(c_1,c_2)$ & $N_2=M_1=0$   & $N_2=M_2\cdot \frac{c_1^2}{c_2^2}$ & $N_1=M_1\cdot \frac{c_2}{c_1^2}$ & - \\\hline 
	\end{tabular}
	\caption{Admissibility constraints.}
	\label{tab:my_label}
\end{table}

Further, we call a Weyl element $w$ relevant, if the condition on $c$ can be satisfied without $N_1,M_1,N_2$ or $M_2$ being zero. Thus the relevant Weyl elements are $1, s_1s_2s_1, s_2s_1s_2$ and $s_1s_2s_1s_2$.

For admissible $(w,c)$ we define the Kloosterman sum by
\begin{equation}
	{\rm KL}_{\Gamma,w}(c;M,N) = \sum_{xwc^{\ast}x'\in \mathfrak{X}_{\Gamma}(c^{\ast}w)} \boldsymbol{\psi}_{\infty}^{(M)}(x)\boldsymbol{\psi}_{\infty}^{(N)}(x').\nonumber
\end{equation}
It is notationally convenient to set ${\rm KL}_{\Gamma,w}(c;M,N) = 0$ if $(w,c)$ is not admissible.

The Kloosterman sum for the trivial Weyl element is easy to compute. Indeed one gets
\begin{equation}
	{\rm KL}_{\Gamma,1}(c;M,N) = \delta_{\substack{M=N,\\ c=(1,1)}} \cdot \mathcal{N}(\Gamma).\label{eq:triv_KS}
\end{equation}
The general case is much harder.

Note that the Kloosterman sums are actually local in nature. In particular, for $\Gamma_{\rm pa}(q)$, they factor as
\begin{equation}
	{\rm KL}_{\Gamma_{\rm pa}(q),w}(c;\mathbf{1},\mathbf{1}) = {\rm KL}_{\Gamma_{\rm pa}(q),w}(d;\mathbf{1},N_{c'}){\rm KL}_{\Gamma_0,w}(c';\mathbf{1},N_d), \label{fact_KS}
\end{equation}
where $c_i=d_ic_i'$ with $d_i\mid q^{\infty}$ and $(c_i',q)=1$.\footnote{Given two numbers $a,b\in \mathbb{N}$ we write $a\mid b^{\infty}$ if every prime divisor of $a$ also divides $b$.} Note that the coordinates of $N_{c'}$ are co-prime to $q$ and $(N_{d,1}N_{d,2},c_1'c_2')=1$. This is essentially \cite[(4.5)]{Man}. For more details we refer to \cite{St} and \cite[Proposition~2.4]{Fr} where Kloosterman sums for the lattice $\textrm{SL}_n(\Z)$ are treated, but the arguments directly generalize to our setting.

\subsection{Bounds for Kloosterman sums}

Below we will need several estimates for Kloosterman sums. The upshot of the factorization stated in \eqref{fact_KS} is, that it separates the unramified and the ramified contribution nicely. Note that we always have the trivial bound
\begin{equation}
	\vert {\rm KL}_{\Gamma,w}(c;M,N)\vert \leq \sharp  \mathfrak{X}_{\Gamma}(c^{\ast}w).\nonumber
\end{equation}
For $\Gamma=\Gamma_0$ the Kloosterman sets are sufficiently well understood by \cite{Dr2} and we have the estimate
\begin{equation}
	{\rm KL}_{\Gamma_0,w}(c;\mathbf{1},\mathbf{1}) \leq \sharp \mathfrak{X}_{\Gamma_0}(c^{\ast}w) \ll (c_1c_2)^{1+\epsilon}.\nonumber
\end{equation}
For $\Gamma_{\rm pa}(q)$ estimating $\sharp \mathfrak{X}_{\Gamma_{\rm pa}(q)}(c^{\ast}w)$ is a more delicate matter, which we will look at in Lemma~\ref{para_ram_KS} below. Before doing so we will recall some non-trivial estimates for unramified Kloosterman sums:

\begin{theorem}[\cite{Man0}] \label{unram_KS}
Let $N,c\in \mathbb{N}^2$ and suppose that $(N_1N_2,c_1c_2)=1$. Then we have the bounds
\begin{align}
	{\rm Kl}_{\Gamma_0,s_1s_2s_1}(c;\mathbf{1},N) &\ll c_1^{\frac{5}{3}+\epsilon} \text{ for }c_1=c_2,\nonumber \\
	{\rm Kl}_{\Gamma_0,s_2s_1s_2}(c;\mathbf{1},N) &\ll c_1^{\frac{5}{2}+\epsilon} \text{ for }c_1^2=c_2 \text{ and }\nonumber \\
	{\rm Kl}_{\Gamma_0,s_1s_2s_1s_2}(c;\mathbf{1},N) &\ll c_1^{\frac{1}{2}+\epsilon}c_2^{\frac{3}{4}+\epsilon}(c_1,c_2)^{\frac{1}{2}}.\nonumber 
\end{align}
\end{theorem}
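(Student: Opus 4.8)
The statement is a purely arithmetic estimate for the $\mathrm{Sp}_4$ Kloosterman sums attached to the relevant Weyl elements of length at least three, so the plan is to pass from the group-theoretic description to an explicit exponential sum and then exploit classical square-root cancellation. First I would use the multiplicativity of the Kloosterman sums (the same factorisation principle recalled above, cf. \cite{St, Fr}): since $(N_1N_2,c_1c_2)=1$, the sum ${\rm Kl}_{\Gamma_0,w}(c;\mathbf{1},N)$ splits over the primes dividing $c_1c_2$, so it suffices to bound the local sums ${\rm Kl}_{\Gamma_0,w}(p^{\mathbf{k}};\mathbf{1},N)$ with $p\nmid N_1N_2$. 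Next, for each of the three relevant Weyl elements $s_1s_2s_1$, $s_2s_1s_2$, $s_1s_2s_1s_2$ I would write down the Kloosterman set $X_{\Gamma_0}(c^{\ast}w)$ in Plücker coordinates following the explicit Bruhat-cell computations in \cite{Dr2, Man0} (compare \cite{Da}); this realises each local sum as a genuine exponential sum over $\mathbb{Z}/p^{k}$ subject to a handful of congruence and invertibility conditions, the hypothesis $(N_1N_2,c_1c_2)=1$ being precisely what makes the relevant linear forms primitive so that no degenerate Gauss-sum-sized term survives.

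For the long Weyl element $w=s_1s_2s_1s_2$ the resulting sum essentially separates, up to the multiplicativity in $c_1$ and $c_2$, into a part modulo $c_1$ which is (a twist of) a classical Kloosterman sum and a part modulo $c_2$ which is a Kloosterman sum twisted by a quadratic of the variable, together with an auxiliary summation over a divisor-type parameter. Applying the Weil bound $\vert S(m,n;c)\vert\leq d(c)(m,n,c)^{1/2}c^{1/2}$ to the first and the standard $c_2^{3/4+\epsilon}$ bound for the second, and estimating the auxiliary sum trivially, should give $c_1^{1/2+\epsilon}c_2^{3/4+\epsilon}$ with the factor $(c_1,c_2)^{1/2}$ accounting for the gcd conditions tying the two moduli together; this is the third bound.

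The intermediate Weyl elements $w=s_1s_2s_1$ (where the admissibility table forces $c_1=c_2=:c$) and $w=s_2s_1s_2$ (where it forces $c_1^2=c_2$) are the real issue: here the Kloosterman set is small compared to the effective modulus, so the trivial bound only yields $c^{2+\epsilon}$, respectively $c_1^{3+\epsilon}$, and one must do genuinely better. I would proceed by a $p$-adic stratification: fix $p\mid c$, split the summation according to the $p$-adic valuations of the Plücker coordinates that occur, and check that on each stratum the sum collapses to a (possibly twisted) Gauss sum, Kloosterman sum, or Salié sum in one variable, which is evaluated or bounded with square-root cancellation. Summing the $O(\log c)$ strata weighted by the sizes of the respective ranges, and optimising over the valuations, should produce the exponents $\tfrac{5}{3}$ and $\tfrac{5}{2}$ in place of $2$ and $3$; the coprimality hypothesis is again used to discard the most degenerate strata.

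The main obstacle is exactly this last step. Unlike the long element, the sums for $s_1s_2s_1$ and $s_2s_1s_2$ do not factor into classical one-variable Kloosterman sums, so one cannot simply quote Weil; the work is in the bookkeeping of the stratification — keeping track of which exponential sum appears in each valuation range, how the constraints on $c$ interact with it, and balancing the strata so as to land precisely on $\tfrac{5}{3}$ and $\tfrac{5}{2}$. I would expect this, together with the underlying Plücker-coordinate parametrisation, to be the technical heart of \cite{Man0}, and I would simply invoke that reference rather than reprove it here.
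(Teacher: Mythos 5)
Your proposal ends by invoking \cite[Theorems~1.3--1.5]{Man0}, which is precisely all the paper does: its ``proof'' of Theorem~\ref{unram_KS} is the single sentence that the bounds are extracted from those results of \cite{Man0}. Your preliminary sketch (multiplicativity, explicit Bruhat/Plücker parametrisation of the Kloosterman sets, reduction to classical Kloosterman/Salié/Gauss sums with Weil-type bounds for the long element, and a finer stratification for $s_1s_2s_1$ and $s_2s_1s_2$) is a reasonable account of what happens inside that reference, but as far as this paper is concerned the argument is the citation itself, so your approach is essentially identical.
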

\begin{proof}
This is extracted from \cite[Theorem~1.2]{Man0}. Note that the estimates given in \cite[Theorem~1.2]{Man0} are more general, so that it might be useful to to make some comments. First, note that $n_w(c_1,c_2)$ used in \cite{Man0} reads $n_w(c_1,c_2) = (c_1,c_2)^{\ast}w$ in our notation. Second, we recall that ${\rm Kl}_{\Gamma_0,w}(c;\mathbf{1},N) =  0$ unless $(w,c)$ is $(\mathbf{1},N)$-admissible. Since in this case the bounds are trivially true we only need to treat $(w,c)$ that are $(\mathbf{1},N)$-admissible. Finally, we remark that the assumption $(N_1N_2,c_1c_2)=1$ allows us discard all the gcd's involving entries of $N$ and $c$ simultaneously from the estimates given in \cite[Theorem~1.2]{Man0}. 

The estimates are collected as follows:
\begin{itemize}
	\item The bound for ${\rm Kl}_{\Gamma_0,s_1s_2s_1}(c;\mathbf{1},N)$ follows from the $6$th estimate in \cite[Theorem~1.2]{Man0} with $c_1=c_2$, in which case $c_2\mid c_1^2$ is obviously satisfied and we have $(c_1,c_2)=c_1$.
	\item Similarly we obtain the bound for ${\rm Kl}_{\Gamma_0,s_2s_1s_2}(c;\mathbf{1},N)$ from the $7$th estimate in \cite[Theorem~1.2]{Man0} with $c_1^2=c_2$. Again this is a special case of $c_1^2\mid c_2$ and we also have $(c_1^2,c_2)=c_1^2$.
	\item The bound for the long Weyl element is given in the last estimate of \cite[Theorem~1.2]{Man0}.
\end{itemize}  
This completes the argument.
\end{proof}

Ramified Kloosterman sums were computed in \cite[Section~4]{Man} for congruence subgroups of Siegel type. We adapt these results to the lattices $\Gamma_{\rm pa}(q)$.

\begin{lemma}\label{para_ram_KS}
Let $q>1$. Unless $q\mid a$ we have
\begin{equation}
	{\rm KL}_{\Gamma_{\rm pa}(q),s_1s_2s_1}((a,a),\mathbf{1},N)={\rm KL}_{\Gamma_{\rm pa}(q),s_2s_1s_2}((a,a^2),\mathbf{1},N) =0. \nonumber 
	 \nonumber
\end{equation}
Similarly, 
\begin{equation}
	{\rm KL}_{\Gamma_{\rm pa}(q),s_1s_2s_1s_2}((a,b),\mathbf{1},N) = 0 \nonumber
\end{equation}
unless $q\mid a$ and $a\mid b$. If $q$ is prime we have
\begin{align}
	{\rm KL}_{\Gamma_{\rm pa}(q),s_1s_2s_1}((q,q),\mathbf{1},N) &= q^2 \text{ and } \nonumber \\
	{\rm KL}_{\Gamma_{\rm pa}(q),s_2s_1s_2}((q,q^2),\mathbf{1},N) &= 0. \nonumber
\end{align}
Furthermore, if $q$ is prime, $(N_2,q)=1$ and $k\in \{1,2,3\}$, then
\begin{equation}
	{\rm KL}_{\Gamma_{\rm pa}(q),s_1s_2s_1s_2}((q,q^k),\mathbf{1},N) \ll \tau(q^k)q^{2+\frac{k-1}{2}}.\nonumber
\end{equation}
\end{lemma}
\begin{proof}
We start by considering the Weyl element $w=s_1s_2s_1$ with a corresponding modulus $c=(a,a)$. We compute
\begin{multline}
	xwc^{\ast}x' = \left(\begin{matrix} 1 & x_1 & x_2 & x_3 \\ 0&1&x_3' & x_4\\ 0&0&1&0\\0&0&-x_1&1\end{matrix}\right) \left(\begin{matrix}0&0&-a^{-1} & 0\\ 0&-1&0&0\\ a&0&0&0\\0&0&0&-1 \end{matrix}\right)\left(\begin{matrix} 1 & y_1/a & y_2/a & y_3/a \\ 0&1&y_3/a & 0\\ 0&0&1&0\\0&0&-y_1/a & 1 \end{matrix}\right) \\
	= \left(\begin{matrix} ax_2 & x_2y_1-x_1 & -\frac{1}{a}-\frac{x_1y_3}{a}+x_3'y_2+\frac{x_4y_1}{a}  & x_2y_3-x_3\\ax_3' & x_3'y_1-1 & x_3'y_2-\frac{y_3}{a}+\frac{x_4y_1}{a} & y_3x_3'-x_4 \\  a & y_1 & y_2 &y_3 \\-ax_1 & -x_1y_1 & -x_1y_2+\frac{y_1}{a} & -1-x_1y_3\end{matrix}\right).\label{set_s1s2s1}
\end{multline}
Now we will look at what we gain from the condition
\begin{equation}
	\gamma=xwc^{\ast}x'\in \left[\begin{matrix} \mathbb{Z} & \mathbb{Z} & q^{-1}\mathbb{Z} & \mathbb{Z} \\ q\mathbb{Z} & \mathbb{Z} & \mathbb{Z}& \mathbb{Z}\\ q\mathbb{Z}&q\mathbb{Z}&\mathbb{Z}&q\mathbb{Z}&\\ q\mathbb{Z}&\mathbb{Z}&\mathbb{Z}&\mathbb{Z}\end{matrix} \right] = \Gamma_{\rm pa}(q).\nonumber
\end{equation}
In complete generality we only make the observation that $q\mid a$ is necessary. Now we specialize to $a=q$. First, we look at the third row and deduce that $y_1=y_3 = 0$ and $y_2\in \mathbb{Z}/q\mathbb{Z}$. Studying the first column gives $x_1=x_3'=0$ and $x_2=q\tilde{x}_2$ with $\tilde{x}_2\in \mathbb{Z}/q\mathbb{Z}$. Finally, from the second row we extract $x_4=0$. This is all the information we can get. In summary we have
\begin{equation}
	{\rm KL}_{\Gamma_{\rm pa}(q),s_1s_2s_1}((q,q),\mathbf{1},N) = q^2,\nonumber
\end{equation}
since both $x_2$ and $y_2$ are not seen by the characters.

We turn towards $w=s_2s_1s_2$ and $c=(a,a^2)$. One computes
\begin{multline}
	xwc^{\ast}x' = \left(\begin{matrix} 1 & x_1 & x_2 & x_3 \\ 0&1&x_3' & x_4\\ 0&0&1&0\\0&0&-x_1&1\end{matrix}\right) \left(\begin{matrix}0&0&0&a^{-1} \\ 0& 0&-a^{-1}&0\\ 0&-a&0&0\\a&0&0&0 \end{matrix}\right)\left(\begin{matrix} 1 & 0 & y_1/a^2 & y_2/a^2 \\ 0&1&y_2/a^2 & y_3/a^2\\ 0&0&1&0\\0&0&0 & 1 \end{matrix}\right) \\ 
	=\left(\begin{matrix} ax_3 & -ax_2  & \frac{x_3y_1}{a}-\frac{x_2y_2}{a}-\frac{x_1}{a}& \frac{1}{a}-\frac{x_2y_3}{a}+\frac{x_3y_2}{a} \\ ax_4 & -ax_3' & -\frac{1}{a}-\frac{x_3'y_2}{a}+\frac{x_4y_1}{a} & \frac{x_4y_2}{a}-\frac{x_3'y_3}{a} \\ 0 & -a & -\frac{y_2}{a}& -\frac{y_3}{a} \\ a & ax_1 & \frac{y_1}{a}+\frac{x_1y_2}{a} & \frac{y_2}{a}+\frac{x_1y_3}{a}	\end{matrix}\right).  \label{set_s2s1s2}
\end{multline}
In general we must have $q\mid a$. For the special case $a=q$ we further look at the third row to obtain $y_3=0$ and $y_2=\tilde{y}_2q$ with $\tilde{y}_2\in \mathbb{Z}/q\mathbb{Z}$. The first column gives $x_3=x_3'=\frac{1}{q}\tilde{x}_3$ with $\tilde{x}_3\in \mathbb{Z}/q\mathbb{Z}$  and $x_4=0$. Considering the second column reveals $x_1=\frac{1}{q}\tilde{x}_1$  and $x_2=\frac{1}{q}\tilde{x}_2$ for $\tilde{x}_1,\tilde{x}_2\in \mathbb{Z}/q\mathbb{Z}$. The remaining entries give the congruence conditions
\begin{align}
	\tilde{x}_3\cdot\tilde{y}_2 + 1 & \equiv 0 \text{ mod }q, \nonumber \\
	\tilde{x}_1\cdot \tilde{y}_2 +y_1 &\equiv 0 \text{ mod }q, \nonumber \\
	\tilde{x}_3y_1-\tilde{x}_1-q\tilde{x}_2\tilde{y}_2 &\equiv 0 \text{ mod }q. \label{para_last_cong}
\end{align}
It turns out that the $\tilde{x}_1$ sum is free and by character orthogonality we conclude that
\begin{equation}
	{\rm KL}_{\Gamma_{\rm pa}(q),s_1s_2s_1}((q,q^2),\mathbf{1},N) = 0. \nonumber
\end{equation}

Finally, we turn towards $w=s_1s_2s_1s_2$ and $c=(a,b)$. We compute
\begin{multline}
	xwc^{\ast}x' = \left(\begin{matrix} 1 & x_1 & x_2 & x_3 \\ 0&1&x_3' & x_4\\ 0&0&1&0\\0&0&-x_1&1\end{matrix}\right) \left(\begin{matrix}0&0&-1/a&0 \\ 0& 0&0&-a/b\\ a&0&0&0\\0&b/a&0&0 \end{matrix}\right)\left(\begin{matrix} 1 & y_1/a & y_2/a & y_3/a \\ 0&1&ay_3'/b & ay_4/b\\ 0&0&1&0\\0&0&-y_1/a & 1 \end{matrix}\right) \\ 
	=\left(\begin{matrix} ax_2 & x_2y_1+\frac{bx_3}{a} & -\frac{1}{a}+\frac{x_1y_1}{b}+x_2y_2+x_3y_3' & -\frac{ax_1}{b}+x_2y_3+x_3y_4 \\ ax_3' & x_3'y_1+\frac{bx_4}{a} & \frac{y_1}{b}+x_3'y_2+x_4y_3' & -\frac{a}{b}+x_3'y_3+x_4y_4 \\ a & y_1 & y_2 & y_3 \\ -ax_1 & \frac{b}{a}-x_1y_1 & y_3'-x_1y_2 & y_4-x_1y_3	\end{matrix}\right).  \label{set_s1s2s1s_2}
\end{multline}
The third entry in the first column shows that $q\mid a$. By looking at the third entry of the second column we obtain $y_1\in q\Z$. Since we are working modulo $\Z$ we can put $y_1=0$. The last entry in the second column gives rise to the condition $\frac{b}{a}\in \Z$. This forces $a\mid b$ as claimed. 

Now we specialize to $a=q$ and $b=q^k$, for $k=1,2,3$. Looking at the last two rows and the first two columns we observe that $x_1=x_3=x_3' = y_1 = y_3 = y_3'=0$. Furthermore $y_2\in \mathbb{Z}/q\mathbb{Z}$, $y_4\in \mathbb{Z}/q^{k-1}\mathbb{Z}$ and $x_2=\frac{1}{q}\tilde{x}_2$ with $\tilde{x}_2\in \mathbb{Z}/q\mathbb{Z}$ and $x_4=\frac{a}{b}\tilde{x}_4$ with $\tilde{x}_4\in \mathbb{Z}/q^{k-1}\mathbb{Z}$. Finally, the last entry in the second row reveals
\begin{equation}
	\tilde{x}_4y_4\equiv 1 \text{ mod }q^{k-1}.\nonumber
\end{equation}
Thus we end up with
\begin{equation}
	{\rm KL}_{\Gamma_{\rm pa}(q),s_1s_2s_1s_2}((q,q^k),\mathbf{1},N) = q^2\sum_{x\in (\mathbb{Z}/q^{k-1}\mathbb{Z})^{\times}}e\left(\frac{x+N_2\overline{x}}{q^{k-1}}\right).\nonumber
\end{equation}
We identify the remaining $x$-sum as a classical Kloosterman sum. The desired estimate follows from standard bounds for the latter.
\end{proof}

\begin{remark}
The proof of Lemma~\ref{para_ram_KS} is essentially elementary. The same brute force strategy can be applied to any other lattice $\Gamma$ (e.g. Klingen congruence subgroup, Borel-type congruence subgroup or principal congruence subgroup). Note that we only perform detailed computations for a restricted range of admissible moduli $c=(c_1,c_2)$. It is an interesting problem, which goes beyond the scope of this article, to determine how ramified Kloosterman sums behave for general moduli $c$. 
\end{remark}

\section{Automorphic preliminaries}

We turn towards the automorphic side of the medal. After introducing the basic objects, namely Hecke-Siegel-Maa\ss\  forms, we consider their adelic lifts and the corresponding representations. Finally, we recall Arthur's parametrization of the discrete spectrum and draw some first conclusions.

\subsection{Automorphic forms and representations}

We equip the space $L^2(X_{\Gamma})$ with the inner product
\begin{equation}
	\langle f,g \rangle_{\Gamma} = \int_{X_{\Gamma}}f(x)\overline{g(x)}dx. \nonumber
\end{equation}
D'apr\`es Langlands it is well known that we have the decomposition
\begin{equation}
	L^2(X_{\Gamma}) = L^2_{\rm disc}(X_{\Gamma})\oplus L^2_{\rm Eis}(X_{\Gamma}). \nonumber
\end{equation}
In this note we focus exclusively on the discrete part, which further decomposes in a space of cusp forms and residues of Eisenstein series:
\begin{equation}
	L^2_{\rm disc}(X_{\Gamma}) = L^2_{\rm cusp}(X_{\Gamma})\oplus L^2_{\rm res}(X_{\Gamma}).\nonumber
\end{equation} 
Each element $\varpi\in L^2_{\rm disc}(X_{\Gamma})$, which is an eigenfunction of all invariant differential operators, comes with a spectral parameter $\mu_{\varpi} \in \mathbb{C}^2$. Below in Section~\ref{sec_arth} we will give a finer decomposition of the discrete part based on Arthur's classification of automorphic representations for $G$.

The (classical) Jacquet period of $\varpi\in L^2(X_{\Gamma})$ is given by
\begin{equation}
	\mathcal{W}_{\varpi}(g) = \int_{U(\mathbb{Z})\backslash U(\mathbb{R})}\varpi(ug) \overline{\boldsymbol{\psi}_{\infty}(u)}du.\nonumber
\end{equation}
If this does not vanish identically as a function on $g$, then we call $\varpi$ generic. Now let $\varpi$ be an eigenform with spectral parameter $\mu_{\varpi}$. Then we can write
\begin{equation}
		\mathcal{W}_{\varpi}(u\iota(y)k) = A_{\varpi}(\mathbf{1}) W_{\mu_{\varpi}}(y)\boldsymbol{\psi}_{\infty}(u), \label{class_whitt}
\end{equation}
where $A_{\varpi}(\mathbf{1})$ is the first Fourier coefficient of $\varpi$ and $W_{\mu_{\varpi}}$ is the standard Whittaker function as defined in \cite{Is}.

Adelically one considers functions in $L^2(Z(\A)G(\mathbb{Q})\backslash G(\mathbb{A}))$. Note that we can identify
\begin{equation}
L^2(Z(\mathbb{A})G(\mathbb{Q})\backslash G(\mathbb{A})) = L^2(G'(\mathbb{Q})\backslash G'(\mathbb{A})),\nonumber
\end{equation}
where $G'={\rm PGSp}_4$. The adelic Petersson norm is defined by
\begin{equation}
	\langle \phi,\phi\rangle = \int_{Z(\mathbb{A})G(\mathbb{Q})\backslash G(\mathbb{A})}\vert\phi(g)\vert^2d^{\rm ta}g, \text{ for }\phi\in L^2(Z(\A)G(\mathbb{Q})\backslash G(\mathbb{A})).\nonumber
\end{equation}
Further, we have the global Whittaker period given by
\begin{equation}
	\mathcal{W}_{\phi}^{(\mathbb{A})}(g) =\int_{U(\mathbb{Q})\backslash U(\mathbb{A})}\phi(ug)\overline{\boldsymbol{\psi}(u)}du.\nonumber
\end{equation}

We call $\phi\in L^2(Z(\A)G(\mathbb{Q})\backslash G(\mathbb{A}))$ an adelic lift of $\varpi\in L^2(X_{\Gamma})$ and write $\phi=\phi_{\varpi}$, if 
\begin{equation}
	\varpi(g) = \phi_f(\iota_{\infty}(g)) \text{ for }g\in G_0(\mathbb{R}).\nonumber
\end{equation}
Note that $\phi_{\varpi}$ is necessarily right-$K_{\Gamma}$-invariant. For the paramodular group $\Gamma_{\rm pa}(q)$ of level $q$ the lifting procedure is well defined. For more context we refer to \cite{ASc}, where the case of classical Siegel modular forms is explained in detail. 

By strong approximation we can relate the classical Jacquet period and the global Whittaker period as follows:
\begin{equation}
	\mathcal{W}_{\phi}^{(\mathbb{A})}(\iota_{\infty}(g)) =  \mathcal{W}_{\varpi}(g), \label{comp_whitt}
\end{equation}
for $\phi=\phi_{\varpi}$ and $g\in G_0(\mathbb{R})$. Similarly, one gets
\begin{equation}
	\langle \phi_{\varpi},\phi_{\varpi}\rangle =  2\frac{\langle \varpi,\varpi\rangle_{\Gamma}}{\mathcal{V}(\Gamma)}.\label{eq:comp_innerprod}
\end{equation}

It will be convenient to package an orthogonal basis of $L^2_{\rm dis}(X_{\Gamma})$ into pieces associated to (irreducible) automorphic representations $\pi$. We write $\pi\mid X_{\Gamma}$ if $\pi$ has non-trivial $K_{\Gamma}$ invariant elements. The finite dimensional space generated by these elements will be denoted by $\pi^{K_{\Gamma}}$. If $\pi$ contributes to the discrete spectrum of $L^2(X_{\Gamma})$, we put
\begin{equation}
	V_{\Gamma}(\pi) = \{ g\mapsto \pi(\iota_{\infty}(g))v\colon v\in \pi^{K_{\Gamma}} \}\subseteq L^2_{\rm disc}(X_{\Gamma}).\label{def:space_of_pi}
\end{equation}
We write $\mathcal{O}_{\Gamma}(\pi)$ for an orthogonal basis of $V_{\Gamma}(\pi)$.

By Flath's theorem we can factor $\pi=\pi_{\infty} \otimes \bigotimes_p \pi_p$ into local representations. In complete analogy to the global case we write $\pi_p^{K_{\Gamma,p}}$ for the space generated by right-$K_{\Gamma,p}$-invariant elements of $\pi_p$ and we let $\mathcal{O}_{K_{\Gamma,p}}(\pi_p)$ be an orthogonal basis of $\pi_p^{K_{\Gamma,p}}$. Note that if $p\nmid D_{\Gamma}$, then $\pi_p^{K_{\Gamma,p}}$ is generated by the (up to scaling) unique spherical element $v_p^{\circ}$. Thus, globally
\begin{equation}
	\{v_{\infty}^{\circ}\otimes \bigotimes_{p\nmid D_{\Gamma}}v_p^{\circ}\otimes\bigotimes_{p\mid D_{\Gamma}}v_p\colon v_p\in \mathcal{O}_{K_{\Gamma,p}}(\pi_p) \text{ for }p\mid D_{\Gamma}\} \nonumber
\end{equation}
can be taken as an orthogonal basis of $\pi^{K_{\Gamma}}$. We will always choose $\mathcal{O}_{\Gamma}(\pi)$ by descending the above basis of $\pi^{K_{\Gamma}}$ to $V_{\Gamma}(\pi)$.

Note that each $\varpi\in V_{\Gamma}(\pi)$ is an eigenfunction of all invariant differential operators (and even of all but finitely many Hecke-operators). The spectral parameter of $\varpi$ only depends on $\pi_{\infty}$. Thus, we can write 
\begin{equation}
	\mu_{\varpi} = \mu(\pi_{\infty}).\nonumber
\end{equation}
Indeed since $\pi_{\infty}$ is spherical it can be realized as Langlands quotient of an induced representation 
\begin{equation}
	\pi_{\infty} = L(\chi_1\times \chi_2\rtimes \sigma).\nonumber
\end{equation} 
Note that by assumption on the central character $\chi$ we have $\chi_1\chi_2\sigma^2 = \chi_{\infty}\in \{ \mathbf{1},{\rm sgn}\}$. We write $\chi_i={\rm sgn}^{\rho_i}\vert \cdot \vert^{\alpha_i}$ and $\sigma = {\rm sgn}^{\rho'}\vert \cdot\vert^{\beta}$. Note that we have $\alpha_1+\alpha_2+2\beta = 0$. We associate the spectral parameter 
\begin{equation}
	\mu(\pi_{\infty}) = (\frac{\alpha_1+\alpha_2}{2},\frac{\alpha_1-\alpha_2}{2}) \in \mathbb{C}^2.\nonumber
\end{equation}
Note that the transfer of $\pi_{\infty}$ to ${\rm GL}_4$ is self-dual and has spectral parameter
\begin{equation}
	(\frac{\alpha_1+\alpha_2}{2},\frac{\alpha_1-\alpha_2}{2},\frac{-\alpha_1+\alpha_2}{2},\frac{-\alpha_1-\alpha_2}{2}) \in \mathbb{C}^4.\nonumber
\end{equation}
With this notation at hand we can express
\begin{equation}
N_{\Gamma}(\sigma;M) = \sum_{\substack{\pi\mid X_{\Gamma}, \\ \Vert \mu(\pi_{\infty})\Vert \leq M,\\ \sigma(\pi_{\infty})\geq \sigma}}\dim_{\mathbb{C}}(V_{\Gamma}(\pi)).\nonumber
\end{equation}

Recall that $\pi$ is called (globally) generic if the global Whittaker period $\mathcal{W}^{(\mathbb{A})}_{\phi}$ is non-zero (as function on $G(\mathbb{A})$) for some element $\phi$ of $\pi$. On the other hand we call $\pi$ abstractly generic (or locally everywhere generic) if $\pi_{\infty}$ and $\pi_p$ are generic for all $p$ in the sense that they admit a non-trivial Whittaker functional. According to \cite[Proposition 8.3.2]{Ar2} $\pi$ is generic if and only if $\pi$ is abstractly generic.\footnote{The implication globally generic implies everywhere locally generic is of course essentially trivial. The other direction however is highly non-trivial. As an alternative to Arthur's seminal work one can argue as in \cite[(6.8)]{Sh} to establish it.}

To each generic representation $\pi_p$ we fix a Whittaker functional leading to an isomorphism $v\mapsto W_v$ between $\pi_p$ and its Whittaker model $\mathcal{W}(\pi_p,\boldsymbol{\psi}_p)$. If $\pi_p$ is unramified we fix (once and for all) the normalization such that
\begin{equation}
	W_{v_{p}^{\circ}}(1)=1.\nonumber
\end{equation}
We want to make similar accommodations at the  archimedean place. To do so we note that if $\pi_{\infty}$ is generic and spherical, then $\pi_{\infty}$ is a principal series. This is \cite[Proposition~3.4.1]{Co}. In particular, we have
\begin{equation}
	\pi_{\infty}\vert_{G_0(\mathbb{R})} = {\rm Ind}_{B_0(\mathbb{R})}^{G_0(\mathbb{R})}(\vert \cdot \vert^{\mu_1}\boxtimes \vert\cdot \vert^{\mu_2}), \nonumber
\end{equation}
where $\mu(\pi_{\infty}) = (\mu_1,\mu_2)$. This is the setting also described in \cite{CI}. We normalize the Whittaker functional so that
\begin{equation}
	W_{v_{\infty}^{\circ}}(1) = W_{\mu(\pi_{\infty})}(1). \nonumber
\end{equation} 
If $\varpi\in \mathcal{O}_{\Gamma}(\pi)$ corresponds to $v_{\infty}^{\circ}\otimes \bigotimes_{p\nmid D_{\Gamma}}v_p^{\circ}\otimes\bigotimes_{p\mid D_{\Gamma}}v_p$, then by uniqueness of the Whittaker functional we must have
\begin{equation}
	\frac{\vert A_{\varpi}(1) W_{\mu_{\varpi}}(1)\vert^2}{\langle\varpi,\varpi\rangle} =  \frac{\vert\mathcal{W}_{\varpi}(1)\vert^2}{\langle\varpi,\varpi\rangle} =  \frac{\vert\mathcal{W}_{\phi_w}^{(\mathbb{A})}(1)\vert^2}{\mathcal{V}(\Gamma)\cdot \langle\phi_{\varpi},\phi_{\varpi}\rangle} =\frac{C(\pi)}{\mathcal{V}(\Gamma)}\cdot \frac{\vert W_{\mu(\pi_{\infty})}(1)\vert^2\cdot \prod_{p\mid D_{\Gamma}}\vert W_{v_p}(1)\vert^2}{\langle\phi_{\varpi},\phi_{\varpi}\rangle}.\nonumber
\end{equation}
The constant (as well as the normalization of the Whittaker functionals at the places $p\mid D_{\Gamma}$) is usually determined via the Rankin-Selberg method. Unfortunately there are some hurdles when carrying this out in generally. These can be resolved under assumption of a conjecture from Lapid and Mao given in \cite{LM}. Here we give a formulation of this conjecture along the lines of \cite[Section~4]{PSS} adapted to our notation: 

\begin{conjecture}[Lapid-Mao] \label{conj}
Let $\pi=\pi_{\infty}\otimes \bigotimes_p \pi_p$ be an irreducible, unitary, generic cuspidal automorphic representation with trivial central character. Suppose that $\pi\mid X_{\Gamma}$. Then, for $\varpi\in \mathcal{O}_{\Gamma}(\pi)$ we have
\begin{equation}
	\frac{\vert A_{\varpi}(1)\vert^2}{\langle \varpi,\varpi\rangle} = \frac{2^{5-c}}{\mathcal{V}(\Gamma)}\cdot \frac{\Lambda^{D_{\Gamma}}(2)\Lambda^{D_{\Gamma}}(4)}{\Lambda^{D_{\Gamma}}(1,\pi,{\rm Ad})}\cdot \prod_{p\mid D_{\Gamma}}J_{\pi_p}(v_p), \nonumber
\end{equation}
where $c=1$ if $\pi$ is stable (i.e. of general type) and $c=2$ if $\pi$ is endoscopic.	Furthermore, $\Lambda^{D_{\Gamma}}(s,\pi,{\rm Ad})$ is the completed adjoint $L$-function of $\pi$ with Euler factors at primes dividing $D_{\Gamma}$ omitted. (Similarly $\Lambda^{D_{\Gamma}}$ is the completed Riemann-Zeta function with Euler factors at $p\mid D_{\Gamma}$ omitted.) Finally, the local period $J_{\pi_p}(v_p)$ is defined by
\begin{equation}
	J_{\pi_p}(v_p) = \int_{U(\mathbb{Q}_p)}^{\rm st}\frac{\langle \pi_p(u)v_p,v_p\rangle}{\langle v_p,v_p\rangle} \overline{\boldsymbol{\psi}_p(u)}du. \nonumber
\end{equation}
\end{conjecture}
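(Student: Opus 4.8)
The identity is an instance of the Lapid--Mao conjecture, so the plan is to reduce the global statement to a product of local identities by the Rankin--Selberg method, in the shape developed in \cite{LM} and established for quasi-split classical groups (in the tempered case) in \cite{FM}. Since $\pi$ has trivial central character we are really working on $\PGSp$, and the natural starting point is the functorial transfer of $\pi$ to its self-dual lift $\Pi$ on ${\rm GL}_4$ supplied by \cite{Ar2}: under it $L(s,\pi,{\rm Ad})$ becomes the symmetric-square $L$-function $L(s,\Pi,{\rm Sym}^2)$, which does carry an integral representation (via the Siegel-parabolic Eisenstein series on ${\rm Sp}_8$, or via the descent construction of Ginzburg--Rallis--Soudry relating the Whittaker--Fourier coefficient of $\pi$ to that of $\Pi$). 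The goal is to evaluate one and the same global integral in two ways, so that $\langle\varpi,\varpi\rangle$ appears on one side and the Whittaker period $\vert A_\varpi(1)W_{\mu_\varpi}(1)\vert^2 = \vert\mathcal{W}_\varpi(1)\vert^2$ on the other.

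Carrying this out, the residue of the (suitably normalized) Eisenstein series at the relevant edge point produces $\langle\phi_\varpi,\phi_\varpi\rangle$ together with the quotient $\Lambda^{D_\Gamma}(2)\Lambda^{D_\Gamma}(4)/\Lambda^{D_\Gamma}(1,\pi,{\rm Ad})$ --- the numerator being nothing but the completed Tamagawa factor of ${\rm Sp}_4$ that already surfaces in the relation $d^{\rm ta}g = \tfrac{4\pi^3}{\zeta(2)\zeta(4)}dg$ --- up to an explicit rational constant and a finite product of local integrals at the places dividing $D_\Gamma\infty$. Unfolding the same integral turns it into an Euler product of local zeta integrals in the Whittaker functions $W_{\phi_v}$; at the unramified places the Casselman--Shalika formula identifies each factor with the corresponding quotient of local $L$-factors, so that the primes $p\mid D_\Gamma$ assemble into $\prod_{p\mid D_\Gamma}J_{\pi_p}(v_p)$ while there remains an archimedean factor which, after division by $\vert W_{\mu(\pi_\infty)}(1)\vert^2$ as our normalization of the archimedean Whittaker functional prescribes, is absorbed into the constant. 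Matching the adelic and classical normalizations through $\langle\phi_\varpi,\phi_\varpi\rangle = 2\langle\varpi,\varpi\rangle/\mathcal{V}(\Gamma)$ then produces the $\mathcal{V}(\Gamma)$ in the denominator and the asserted shape.

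The constant $2^{5-c}$ is the arithmetic fingerprint of the statement: it records the order $\vert\mathcal{S}_\psi\vert = 2^{c-1}$ of Arthur's global component group --- trivial when $\pi$ is of general (stable) type and of order two in the endoscopic (Yoshida) case --- together with a universal power of $2$ coming from the measure normalizations and from the way $\pi^{K_\Gamma}$ sits inside $V_\Gamma(\pi)$ in the relevant local packets. Making this precise is where the endoscopic classification \cite{Ar2} and the explicit local packet data enter.

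The main obstacle is twofold. Analytically, one must evaluate the archimedean zeta integral exactly and chase every power of $2$ and every $\zeta$-value through the residue computation; this is precisely the step for which the present note falls back on \cite{CI} in the square-free case, and it is what makes general level genuinely hard. Arithmetically, the conjecture is at present only known in the tempered range, by \cite[Theorem~6.3]{FM}; for generic non-tempered cuspidal $\pi$ --- exactly the forms one would need in order to make \eqref{main_th_eq} unconditional for general $q$ --- it is open, even though such $\pi$ are expected not to exist. For this reason we circumvent Conjecture~\ref{conj} in the present note rather than proving it.
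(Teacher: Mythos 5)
This statement is a \emph{conjecture} (due to Lapid and Mao), and the paper offers no proof of it: it is used only as a conditional input, with the unconditional substitute being the special case for square-free paramodular level proved by Chen--Ichino (Theorem~\ref{CI_th}) and the tempered case due to Furusawa--Morimoto. Your text is therefore not something that can be checked against a proof in the paper, and indeed it is not a proof at all --- as you yourself say in the last sentence, the identity is open for generic non-tempered $\pi$, which is exactly the range one would need. As a roadmap it is consistent with how such formulas are expected to be established: the identification of ${\rm Ad}$ with ${\rm Sym}^2$ of the ${\rm GL}_4$ transfer is correct, the appearance of $\Lambda^{D_\Gamma}(2)\Lambda^{D_\Gamma}(4)$ as the Tamagawa-type factor of ${\rm Sp}_4$ matches the measure normalizations in the paper, and reading $2^{5-c}$ as $\vert\mathcal{S}_\psi\vert^{-1}=2^{1-c}$ times a fixed power of $2$ is precisely the shape of the Lapid--Mao prediction and of Arthur's multiplicity formula. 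But be aware that the two substantive steps you wave at --- an exact Rankin--Selberg/descent evaluation producing the stable local integrals $J_{\pi_p}(v_p)$ at ramified places, and the archimedean computation --- are the entire content of the problem (this is what \cite{CI} and \cite{FM} do in their respective settings), so your paragraph should be presented as a heuristic for why the conjecture has this shape, not inside a proof environment, to avoid suggesting the statement has been established.
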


A special case of Conjecture~\ref{conj} was solved in \cite{CI}. Since their result is important for the unconditional part of Theorem~\ref{main_theorem} we recall it here:
\begin{theorem}[Theorem~2.1, \cite{CI}]\label{CI_th}
Let $q\geq 1$ be square-free, then we have
\begin{equation}
	\frac{\vert A_{\varpi}(\mathbf{1})\vert^2}{\langle \varpi,\varpi\rangle_{\Gamma_{\rm pa}(q)}} = \frac{2^{5-c}}{\mathcal{V}(\Gamma_{\rm pa}(q))}\cdot \frac{\Lambda^{q}(2)\Lambda^{q}(4)}{\Lambda^{q}(1,\pi,{\rm Ad})}\cdot \prod_{p\mid q}\frac{p}{(1-p^{-2})^2}, \nonumber
\end{equation}
for $\varpi\in \mathcal{O}_{\Gamma_{\rm pa}(q)}(\pi)$ and $\pi\mid X_{\Gamma_{\rm pa}(q)}$ generic.
\end{theorem}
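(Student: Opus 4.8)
This is \cite[Theorem~1.2]{CI}, so strictly nothing new is required; we indicate the strategy and how it sits inside Conjecture~\ref{conj}. First I would note that, since $q$ is square-free, $D_{\Gamma}={\rm rad}(q)=q$, so the omitted Euler factors are exactly those of Conjecture~\ref{conj} and the only task beyond that conjecture is to evaluate the local periods $J_{\pi_p}(v_p)$ for $p\mid q$ on the paramodular newvector $v_p$ (which then carries conductor exponent $v_p(q)=1$). The plan accordingly splits in two: establish the global Whittaker-period identity that underlies Conjecture~\ref{conj}, and then carry out the ramified local computation.

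For the global part the plan is to start from the local factorization of $|\mathcal{W}_{\phi_{\varpi}}^{(\mathbb{A})}(1)|^2/\langle\phi_{\varpi},\phi_{\varpi}\rangle$ recorded just before Conjecture~\ref{conj}, to rewrite each local factor as a stable integral of a normalized matrix coefficient against $\overline{\boldsymbol{\psi}_v}$, and then to invoke the standard unramified (Casselman--Shalika type) computation at the finite places away from $q$ together with the corresponding archimedean computation. This produces the ratio $\Lambda^{q}(2)\Lambda^{q}(4)/\Lambda^{q}(1,\pi,{\rm Ad})$ and the archimedean value $|W_{\mu(\pi_{\infty})}(1)|^2$ built into the normalization, while the constant $2^{5-c}$ emerges from the normalization of Tamagawa measures together with the structure of the global component group in Arthur's classification, distinguishing $\pi$ of general type ($c=1$) from $\pi$ endoscopic of Yoshida type ($c=2$). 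Concretely I would obtain this identity either by unfolding a Rankin--Selberg integral representation of $\Lambda(s,\pi,{\rm Ad})$ for ${\rm GSp}_4(\mathbb{Q}_p)$, or by transporting the known ${\rm GL}$-statement across a theta lift; the work of Furusawa and Morimoto \cite{FM} on the tempered Lapid--Mao conjecture lies in the same circle of ideas.

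The hard part will be the ramified local computation, which is exactly where the square-free hypothesis enters: for each $p\mid q$ one must show $J_{\pi_p}(v_p)=p/(1-p^{-2})^2$ on the paramodular newvector of level $p$. Here I would use the Roberts--Schmidt classification \cite{RS} of $K_{\Gamma_{\rm pa}(q),p}$-invariant vectors in irreducible admissible representations of ${\rm GSp}_4(\mathbb{Q}_p)$, explicit formulas for the relevant matrix coefficients restricted to $U(\mathbb{Q}_p)$, and careful treatment of the stable integral $\int^{\rm st}$ at the (conjecturally nonexistent) generic non-tempered points, where the naive integral need not converge absolutely. The resulting factor $p/(1-p^{-2})^2$ then absorbs the paramodular volume ${\rm Vol}(K_{\Gamma_{\rm pa}(q),p})=q^{-2}(1+q^{-2})^{-1}$ and the contribution of the ramified newvector. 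Extending this to $v_p(q)\geq 2$, or to other congruence structures, is what is not settled here; see \cite{PSS} for the simple supercuspidal case.
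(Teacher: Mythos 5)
Your proposal is correct in the only sense that matters here: the paper does not prove this statement at all, but simply imports it as Theorem~1.2 of \cite{CI}, exactly as you do. Your added sketch (global Whittaker-period identity in the spirit of Conjecture~\ref{conj} plus the ramified paramodular evaluation $J_{\pi_p}(v_p)=p/(1-p^{-2})^2$ at $p\mid q$) is a plausible outline of how \cite{CI} proceeds, but none of it is required or verified in this paper.
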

\begin{proof}
To see how this follows from \cite[Theorem~2.1]{CI} we first combine \eqref{class_whitt} with \eqref{comp_whitt} to see that
\begin{equation}
	\mathcal{W}_{\phi_{\varpi}}^{(\A)}(1) = \mathcal{W}_{\varpi}(1) = A_{\varpi}(\mathbf{1})W_{\mu_{\varpi}}(1).\nonumber
\end{equation}
Note that our $\mathcal{W}_{\phi_{\varpi}}^{(\A)}$ is simply denoted by $W$ in \cite[Section~2]{CI}. We also recall \eqref{eq:comp_innerprod} and note that $\langle\phi_{\varpi},\phi_{\varpi}\rangle$ is normalized as in \cite[Section~2]{CI}. 

Next we observe that since $\varpi\in \mathcal{O}_{\Gamma}(\pi)$ our form $\phi_{\varpi}$ has locally the transformation behavior described in \cite[(2.4)]{CI}. In particular, we can use \cite[Theorem~2.1]{CI} to compute the Petersson norm of $\varpi$. Indeed, using notation from \cite{CI}, we obtain
\begin{multline}
	\frac{\vert A_{\varpi}(\mathbf{1})W_{\mu_{\varpi}}(1)\vert^2}{\langle \varpi,\varpi\rangle_{\Gamma_{\rm pa}(q))}}= \frac{2}{\mathcal{V}(\Gamma_{\rm pa}(q))}\cdot \frac{\vert \mathcal{W}_{\phi_{\varpi}}^{(\A)}(1)\vert^ 2}{\langle \phi_{\varpi},\phi_{\varpi}\rangle} \\ = \frac{2}{\mathcal{V}(\Gamma_{\rm pa}(q))}\cdot\vert W_{\mu_{\varpi}}(1) \vert^2\cdot \left(2^c\frac{L(1,\pi,\textrm{Ad})}{\Delta_{\textrm{PGSP}_4}}\prod_vC(\pi_v)\right)^{-1} \label{eq:not_det}
\end{multline}
Taking the quotient makes the expression scaling invariant, so that there is no problem in using \cite[(2.5) and (2.6)]{CI} to compute it. We are done after dividing both sides by $\vert W_{\mu_{\varpi}}(1)\vert^2$ and explicating the right hand side of \eqref{eq:not_det} using the exact values for $C(\pi_v)$ given in the statement of \cite[Theorem~2.1]{CI}.
\end{proof}

For more general $\Gamma$ estimating the first Fourier coefficient from below is a hard problem. By assuming Conjecture~\ref{conj} it reduces to a purely local computation, which is still hard. We are however tempted to conjecture that
\begin{equation}
	\prod_{p\mid D_{\Gamma}}\sum_{v_p\in \mathcal{O}_{K_{\Gamma,p}}(\pi_p)} J_{\pi_p}(v_p) \asymp \dim V_{\pi}(\Gamma)\cdot \mathcal{N}(\Gamma), \label{con_2}
\end{equation}
for generic $\pi=\otimes_v \pi_v$. Of course this is speculation but we have the following \textit{evidence}:
\begin{itemize}
	\item For $\Gamma=\Gamma_{\rm pa}(q)$ with square-free $q$ this follows from the main result of \cite{CI}. Furthermore, if $\pi_p$ is a simple supercuspidal representation (this forces $p^5\mid q$), the local integral $J_{\pi_p}(v_p)$ has recently been computed for the local new-vector $v_p$ in \cite{PSS}.
	
	\item For congruence subgroups associated to parabolic subgroups (i.e Siegel congruence subgroups, Klingen congruence subgroups or Borel-type congruence subgroups) of square-free level local computations in the spirit of \cite{DPSS} lead to the desired result.
	
	\item For the principal congruence subgroup of square-free level the arguments from \cite{AB} can be adapted to prove \eqref{con_2} in the setting at hand. Note that carrying this out requires a little bit of care, since not all depth zero supercuspidal representations are generic.
\end{itemize}

\subsection{The parametrization of the discrete spectrum}\label{sec_arth}

The parametrization of the discrete spectrum of ${\rm GSp}_4$ in terms of discrete automorphic forms of ${\rm GL}_n$ has been announced in \cite{Ar1}. Building on the seminal monograph \cite{Ar2} this has now been established in \cite{GT}. This parametrization will be important for us in order to upgrade the density estimate for spherical, generic representations to a density theorem for the full (spherical) discrete spectrum. This will require a good understanding of the local constituents of the Arthur packets. For trivial central character the representations factor through $G'={\rm PGSp}_4$ and an explicit parametrization was worked out in \cite{Sc1, Sc2}. We will follow the notation within these references.

Using the exceptional isomorphism ${\rm SO}_5\cong G'$ we can express the central result from \cite{Ar1} as
\begin{equation}
	L^2_{\rm disc}(G'(\mathbb{Q})\backslash G'(\mathbb{A})) \equiv \bigoplus_{\psi\in \boldsymbol{\psi}_2(G')}\bigoplus_{\pi\in \Pi_{\psi}\colon \langle \cdot,\pi\rangle=\epsilon_{\psi}}\pi. \label{para_disc}
\end{equation}
See also \cite[(1.2)]{Sc1}. We will introduce the missing notation on the way of gathering all the necessary information contained in \eqref{para_disc}. In the following we will classify the parameters $\psi$ by type. Relevant types are \textbf{(G)}, \textbf{(Y)}, \textbf{(Q)}, \textbf{(P)}, \textbf{(B)} and \textbf{(F)}. Accordingly we decompose
\begin{equation}
	L^2_{\rm disc}(G'(\mathbb{Q})\backslash G'(\mathbb{A})) = \bigoplus_{\ast \in \{ \mathbf{G}, \mathbf{Y}, \mathbf{Q}, \mathbf{P}, \mathbf{B}, \mathbf{F}\}}L^2_{\mathbf{(}\boldsymbol{\ast}\textbf{)}}(G'(\mathbb{Q})\backslash G'(\mathbb{A})).\nonumber
\end{equation}
We will now spend the rest of this section describing the Arthur parameters of each type in detail.

First, the set $\boldsymbol{\psi}_2(G')$ consists of formal expressions
\begin{equation}
	\psi = (\mu_i\boxtimes \nu_1)\boxplus \ldots\boxplus (\mu_r\boxtimes \nu_r).\nonumber
\end{equation}
where $\mu_i$ is a self-dual, cuspidal automorphic representation of ${\rm GL}_{m_i}(\mathbb{A})$ and $\nu_i$ is the irreducible representation of ${\rm SL}_2(\mathbb{C})$ of dimension $n_i$. Furthermore, the following conditions need to be satisfied:
\begin{enumerate}
	\item $m_1n_1+\ldots+m_rn_r=4$;
	\item $\mu_i\boxtimes \nu_i \neq \mu_j\boxtimes \nu_j$ for $i\neq j$;
	\item If $n_i$ is odd (resp. even) then $\mu_i$ is symplectic (resp. orthogonal).
\end{enumerate}
Analyzing these conditions as in \cite{Sc1} leads to different types of parameters. They are as follows:
\begin{enumerate}
	\item \textbf{General Type (G):} $\psi = \mu\boxtimes 1$ for a self-dual symplectic (i.e. $L(s,\mu,\wedge^2)$ has a pole at $s=1$) unitary cuspidal automorphic representation of ${\rm GL}_4(\mathbb{A})$.
	\item \textbf{Yoshida Type (Y):} $\psi=(\mu_1\boxtimes 1)\boxplus (\mu_2\boxtimes 1)$ for distinct unitary cuspidal automorphic representations of ${\rm GL}_2(\mathbb{A})$ with trivial central character.
	\item \textbf{Soundry Type (Q):} $\psi=\mu\boxtimes \nu(2)$ for a self-dual unitary cuspidal automorphic representation $\mu$ of ${\rm GL}_2(\mathbb{A})$ with non-trivial central character $\omega_{\mu}$. ($\nu(2)$ is the two dimensional irreducible representation of ${\rm SL}_2(\mathbb{C})$.)
	\item \textbf{Saito-Kurokawa Type (P):} $\psi=(\mu\boxtimes 1)\boxplus (\sigma\boxtimes \nu(2))$ for a unitary cuspidal automorphic representation $\mu$ of ${\rm GL}_2(\mathbb{A})$ with trivial central character and a quadratic Hecke character $\sigma$.
	\item \textbf{Howe-Piatetski-Shapiro Type (B):} $\psi=(\chi_1\boxtimes\nu(2))\boxplus(\chi_2\boxtimes \nu(2))$ for two distinct quadratic Hecke characters.
	\item \textbf{Finite Type (F):} $\psi=\xi\boxtimes \nu(4)$ For a quadratic Hecke character $\xi$. ($\nu(4)$ is the four dimensional irreducible representation of ${\rm SL}_2(\mathbb{C})$.)
\end{enumerate}

To each packet we attach the group $\mathcal{S}_{\psi}$. For all practical purposes it is sufficient to know that $\mathcal{S}_{\psi} = 1$ if $\psi$ is of type \textbf{(G)}, \textbf{(Q)} or \textbf{(F)}. For the remaining types, namely \textbf{(Y)}, \textbf{(P)} and \textbf{(B)} we have $\mathcal{S}_{\psi}=\{\pm 1\}$.

The global parameter $\psi$ has a localization at each place $v\in \{\infty,2,3,\ldots\}$. Formally these are maps 
\begin{equation}
	\psi_v\colon L_{\mathbb{Q}_v}\times{\rm SU}_2 \to {\rm Sp}_4(\mathbb{C}),\nonumber
\end{equation}
satisfying certain compatibility conditions. In our setting they can be given quite explicitly in terms of the local Langlands parameters of the ${\rm GL}_n$ objects that appear in the corresponding global parameter. We omit their explicit description and refer to \cite[Section~1.2]{Sc1} for details. As in the global case each local parameter comes with a local centralizer group $\mathcal{S}_{\psi_v}$ and we have canonical maps
\begin{equation}
	\mathcal{S}_{\psi}\to \mathcal{S}_{\psi_v}.\nonumber
\end{equation}

Now we attach to each local parameter a finite packet of admissible representations $\Pi_{\psi_v}$ as in \cite[Theorem~1.5.1]{Ar2}. Each packet comes with a canonical map
\begin{equation}
	\Pi_{\psi_v}\ni \pi_v \mapsto \langle \cdot,\pi_v\rangle\in \widehat{\mathcal{S}}_{\psi_v}. \nonumber
\end{equation}
Note that if $\pi_v$ is unramified we have $\langle \cdot ,\pi_v\rangle=1$. The global packet is then given by
\begin{equation}
\Pi_{\psi} = \{\pi=\oplus_v\pi_v\colon \pi_v\in \Pi_{\psi_v}\}.\nonumber
\end{equation}
This is all we want to say about the parameters $\psi$ in general. We now turn towards the individual analysis of packets for the different types.

\subsubsection{Parameters of type (G) and (Y)} For a parameter $\psi$ of type \textbf{(G)} or \textbf{(Y)} the local parameter $\psi_v$ is trivial on ${\rm SU}_2$. We can therefore interpret it as a classical $L$-parameter and it turns out that the local packet $\Pi_{\psi_v}$ coincides with the packets defined by the local Langlands correspondence. See \cite{GaT} for the latter in the case of ${\rm GSp}_4$. In particular, the elements of $\Pi_{\psi_v}$ are irreducible and unitary. 

\begin{lemma}\label{tempered_pack}
Let $\psi$ be an parameter of type \textbf{(G)} or \textbf{(Y)}. Then $\Pi_{\psi}$ contains precisely one generic member $\pi_{\psi}^{\rm gen}$. For $q>1$ arbitrary we have
\begin{equation}
	\sum_{\pi\in \Pi_{\psi}\colon \langle \cdot ,\pi\rangle=\epsilon_{\psi}} \dim V_{\Gamma_{\rm pa}(q)}(\pi) = \dim V_{\Gamma_{\rm pa}(q)}(\pi_{\psi}^{\rm gen}).\label{eq:in_proof}
\end{equation}
\end{lemma}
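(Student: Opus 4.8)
The plan is to combine the endoscopic classification \eqref{para_disc} with two inputs: the multiplicity formula governing which members of $\Pi_\psi$ occur in the discrete spectrum, and the known structure of the local packets $\Pi_{\psi_v}$ at the paramodular level. First I would recall that since $\psi$ is of type \textbf{(G)} or \textbf{(Y)} the parameter is trivial on the ${\rm SU}_2$-factor, so each $\Pi_{\psi_v}$ is an $L$-packet for ${\rm GSp}_4$ in the sense of the local Langlands correspondence of \cite{GaT}, consisting of irreducible unitary (tempered) representations. The key fact about such packets is that they contain exactly one $\boldsymbol{\psi}_v$-generic member; this, together with the observation that a global representation is generic iff it is generic at every place (by \cite[Proposition~8.3.2]{Ar1}, as recalled in the excerpt), produces the unique generic member $\pi_\psi^{\rm gen}=\otimes_v\pi_{\psi_v}^{\rm gen}$ of $\Pi_\psi$, and by Arthur's theory (Rodier's result that the generic member has trivial character) we have $\langle\cdot,\pi_{\psi_v}^{\rm gen}\rangle=1$ for all $v$, so $\langle\cdot,\pi_\psi^{\rm gen}\rangle=1=\epsilon_\psi$ (the sign character $\epsilon_\psi$ is trivial for these types because the relevant $L$-values do not enter). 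Hence $\pi_\psi^{\rm gen}$ is one of the representations on the right of \eqref{eq:in_proof}, giving the inequality $\geq$.

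\textbf{The main point: the reverse inequality.} For the bound $\leq$ I would argue that any $\pi\in\Pi_\psi$ with $\langle\cdot,\pi\rangle=\epsilon_\psi$ and $V_\Gamma(\pi)\neq 0$ must in fact equal $\pi_\psi^{\rm gen}$. The hypothesis $V_\Gamma(\pi)\neq 0$ means $\pi_p$ has a nonzero $K_{\Gamma,p}$-fixed vector for every $p$, i.e. $\pi_p$ is paramodular at $p\mid D_\Gamma={\rm rad}(q)$ and unramified at $p\nmid D_\Gamma$. At the unramified places $\langle\cdot,\pi_p\rangle=1$ automatically. The decisive local input is the classification of \emph{paramodular} members of these tempered local packets for square-free level: by the work of Roberts--Schmidt (for the level structure) and of Schmidt \cite{Sc1, Sc2} (for the packet computations used in the excerpt), at each $p\mid q$ the paramodular vectors in $\Pi_{\psi_p}$ are accounted for by the generic member $\pi_{\psi_p}^{\rm gen}$ only — precisely, the non-generic members of $\Pi_{\psi_p}$ (which exist only when $\mathcal S_{\psi_p}\neq 1$, i.e. in the Yoshida case and only at ramified places) have no nonzero $K_{\Gamma,p}$-fixed vectors, so $\pi_p^{K_{\Gamma,p}}\neq 0$ forces $\pi_p=\pi_{\psi_p}^{\rm gen}$ and $\langle\cdot,\pi_p\rangle=1$. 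Therefore the only $\pi$ contributing to the left side of \eqref{eq:in_proof} is $\pi_\psi^{\rm gen}$ itself, and both sides equal $\dim V_\Gamma(\pi_\psi^{\rm gen})$.

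\textbf{Where the difficulty lies.} The genuinely nontrivial ingredient is the last step: showing that at $p\mid q$ the non-generic constituents of a tempered (type \textbf{(Y)}) local packet carry no paramodular vector, so that paramodularity pins down the generic member. This rests on the explicit description of the paramodular new-vector theory of Roberts--Schmidt together with Schmidt's tabulation of which representations in each packet are paramodular; I would cite \cite{RS, Sc1, Sc2} for this and quote the relevant table rather than reprove it. For type \textbf{(G)} the packet is a singleton ($\mathcal S_\psi=1$), so there is nothing to check beyond the uniqueness of the generic member, and the statement is immediate. One should also note that in type \textbf{(Y)} the relevant global sign $\epsilon_\psi$ can be nontrivial as a character of $\mathcal S_\psi=\{\pm1\}$ in general, but because we restrict to representations admitting paramodular vectors — hence to $\pi_\psi^{\rm gen}$, with $\langle\cdot,\pi_\psi^{\rm gen}\rangle=1$ — the only way for this one representation to actually occur discretely is $\epsilon_\psi=1$, and when $\epsilon_\psi\neq1$ the right side of \eqref{eq:in_proof} is an empty sum while $V_\Gamma(\pi_\psi^{\rm gen})$ must then also vanish in the discrete spectrum; I would spell out this bookkeeping carefully so the identity holds in all cases, including when both sides are zero.
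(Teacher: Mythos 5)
Your proposal is correct and follows essentially the same route as the paper: uniqueness of the generic member of $\Pi_{\psi}$ via \cite[Proposition~8.3.2]{Ar1}, and then the local fact from \cite{Sc1} (and the Roberts--Schmidt paramodular theory \cite{RS}) that in the two-element tempered local packets only the generic member carries paramodular-fixed vectors, which is exactly the paper's key step. Two minor points that do not affect the argument: two-element local packets (the packets $\{{\rm VIa},{\rm VIb}\}$, $\{{\rm VIIIa},{\rm VIIIb}\}$, $\{{\rm Va},{\rm Va}^{\ast}\}$, $\{{\rm XIa},{\rm XIa}^{\ast}\}$) also arise as localizations of type \textbf{(G)} parameters, not only in the Yoshida case, and $\epsilon_{\psi}$ is automatically trivial for these tempered types, so the closing bookkeeping about a possibly nontrivial $\epsilon_{\psi}$ is unnecessary.
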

\begin{proof}
That $\Pi_{\psi}$ contains precisely one generic member follows form \cite[Proposition~8.3.2]{Ar2}. (See also \cite[Theorem~1.1]{Sc1}.) Note that each $L$-packet for ${\rm GSp}_4$ has at most two elements. Of course it suffices to check those $L$-packets with exactly two elements. These are
\begin{equation}
	\{{\rm VIa}, {\rm VIb}\},\ \{{\rm VIIIa},{\rm VIIIb}\},\, \{{\rm Va},{\rm Va}^{\ast}\} \text{ and }\{{\rm XIa}, {\rm XIa}^{\ast}\}.\label{L-pack}
\end{equation}
The desired equality follows if we can show that 
\begin{equation}
	\dim_{\mathbb{C}}(\pi')^{K_{\Gamma_{\rm pa}(q),p}} =0,\label{eq:vanishing_para}
\end{equation}
where $\Pi_{\psi_p} = \{\pi_{\psi_p}^{gen},\pi'\}$ is one of the $L$-packets given in \eqref{L-pack}. For the paramodular group $\Gamma_{\rm pa}(q)$ and $p\mid q$ this follows from \cite[Theorem~1.1]{Sc1}. A related argument can be found in the proof of \cite[Lemma~2.5]{Sc1}.
\end{proof}

\begin{remark}
The exact equality \eqref{eq:in_proof} is special for the paramodular group. Indeed, for more general $\Gamma$, the vanishing result \eqref{eq:vanishing_para} will fail. As a consequence the global Arthur packets of  type \textbf{(G)} and \textbf{(Y)} will have non-generic members that contribute to the cuspidal spectrum of $X_{\Gamma}$. However, we expect the estimate
\begin{equation}
	\sum_{\pi\in \Pi_{\psi}\colon \langle \cdot ,\pi\rangle=\epsilon_{\psi}} \dim V_{\Gamma}(\pi) \ll 2^{\omega(D_{\Gamma})}\cdot \dim V_{\Gamma}(\pi_{\psi}^{\rm gen}),\label{eq:expected_temp}
\end{equation}
to hold in great generality. For example, by using \cite[Table~A.15]{RS}, one can easily verify this for Siegel congruence subgroups, Klingen congruence subgroups and Borel-type congruence subgroups. Furthermore, a similar estimate can be given for the principal congruence subgroup. The corresponding local dimensions have been computed in \cite{Br}.
\end{remark}

\subsubsection{Parameters of type (B) and (Q)} It turns out that packets of these types do not contribute to the spectrum of $X_{\Gamma_{\textrm{pa}}(q)}$. This is due to \cite[Proposition~5.1]{Sc2}. We record this fact formally in form of the following Lemma:

\begin{lemma}\label{lm:pack_B_and_Q}
Let $\psi$ be an Arthur parameter of type \textbf{(B)} or \textbf{(Q)} and let $q>1$ be arbitrary. We have $V_{\Gamma_{\rm pa}(q)}(\pi)=\{0\}$ for all $\pi\in \Pi_{\psi}$. 
\end{lemma}
\begin{proof}
Suppose there is $\pi\in \Pi_{\psi}$ with $V_{\Gamma_{\rm pa}(q)}(\pi)\neq \{0\}$. Then $\pi$ would be paramodular at every finite place, which is impossible by \cite[Proposition~5.1]{Sc2}.
\end{proof}

\begin{remark}\label{rem:cap_pack}
One should not expect a vanishing result as in Lemma~\ref{lm:pack_B_and_Q} to hold for arbitrary families of congruence lattices. For example, if $\Gamma$ is a principal congruence subgroups, then packets of both types will contribute to the discrete spectrum of $X_{\Gamma}$. It should be noted that in this case one encounters representations with non-trivial central character. In particular, one has to use \cite{GT} and the explicit description of the local packets can not be directly obtained from \cite{Sc2}. For the types \textbf{(B)} and \textbf{(P)} this can be resolved using \cite{P-S}, but for packages of type \textbf{(Q)} a little more work seems to be necessary.\footnote{I would like to thank R. Schmidt for pointing me to the result in \cite{P-S} and for other very useful tips concerning the (local) Arthur packets in the presence of a non-trivial central character.}
\end{remark}

\subsubsection{Packets of type (P)} Fix a parameter $\psi=(\mu\boxtimes 1)\boxplus (\sigma\boxtimes\nu(2))$ of type \textbf{(P)} given by a unitary cuspidal automorphic representation $\mu$ with trivial central character and a quadratic Hecke character $\sigma$. The global base point $\pi_{\psi}^+$, as defined below \cite[(12)]{Sc2}, is the isobaric constituent of the globally induced representation 
\begin{equation}
	\vert \cdot\vert^{\frac{1}{2}}\sigma\mu\ltimes \vert \cdot \vert^{-\frac{1}{2}}\sigma. \nonumber 
\end{equation}
Note that here $\epsilon_{\psi} = \epsilon(\frac{1}{2},\sigma^{-1}\otimes\mu)$, so that we have the global compatibility condition
\begin{equation}
	\prod_{v}\epsilon(\pi_v)  =\langle -1,\pi\rangle = \epsilon(\frac{1}{2},\sigma^{-1}\otimes\mu).\nonumber
\end{equation}
The local packets depend on the factorization $\mu=\otimes_v\mu_v$. We briefly summarize \cite[Table~2]{Sc2} as follows:
\begin{center}
	{\tabulinesep=1.2mm
	\begin{tabu}{ | c | c | c | c | c|  }
		\hline
		$\mu_v$ & $\chi\times\chi^{-1}$ & $\chi\cdot {\rm St}_{{\rm GL}_2}$ & $\sigma_v\cdot {\rm St}_{{\rm GL}_2}$ & supercuspidal \\ \hline
		$\Pi_{\psi_v}$ & $\{{\rm IIb}\}$ &  $\{{\rm Vb}, {\rm Va}^{\ast}\}$ &  $\{{\rm VIc}, {\rm VIb}\}$ & $\{{\rm XIb}, {\rm XIa}^{\ast} \}$ \\	\hline
	\end{tabu}}
\end{center}
(Note that we have written  the local base point as first entry in each packet.) 

A finer investigation of the properties of packets of type $\mathbf{(P)}$ will naturally lead us to related properties of the underlying $\textrm{GL}_2$ representation $\mu$. In this context we write $\Gamma_H(q)$ for the standard Hecke congruence subgroup of level $q$. In analogy with \eqref{def:space_of_pi} we can define the space $V_{\Gamma_H(q)}(\mu)$ in the $\textrm{GL}_2$-setting. (Here the classical upper half plane $\mathbb{H}$ takes over the role of $\mathbb{H}_2$.) The dimension of $V_{\Gamma_H(q)}(\mu)$ is well understood by the newform theory of Atkin and Lehner. It depends only on the local conductor exponent of $\mu_v$, which we denote by $a(\mu_v)$. The (global) conductor of $\mu$ is given by $c(\mu)=\prod_pp^{a(\mu_p)}$. We are now ready to prove the following result:

\begin{lemma}\label{lm:pack_P}
Let $\psi=(\mu\boxtimes 1)\boxplus (\sigma\boxtimes\nu(2))$ ba an Arthur parameter of type \textbf{(P)}. For $q>1$ arbitrary we have
\begin{equation}
	\sum_{\substack{\pi\in \Pi_{\psi}\\ \langle -1,\pi\rangle =\epsilon(\frac{1}{2},\mu)}} \dim V_{\Gamma_{\rm pa}(q)}(\pi) = \delta_{\epsilon(\frac{1}{2},\mu)=1} \dim V_{\Gamma_{\rm pa}(q)}(\pi_{\psi}^{+}). \nonumber
\end{equation}
Furthermore, we have $\dim V_{\Gamma_{\rm pa}(q)}(\pi_{\psi}^{+})\leq \dim V_{\Gamma_H(q)}(\mu)\ll q^{\epsilon}$ and $V_{\Gamma_{\rm pa}(q)}(\pi_{\psi}^{+}) = \{0\}$ unless the conductor of $\mu$ divides $q$ and $\sigma$ is trivial. Finally, if $\varpi \in V_{\Gamma_{\rm pa}(q)}(\pi_{\psi}^{+})$ is non-zero, then $\sigma_{\varpi}=\frac{1}{2}$. 
\end{lemma}
\begin{proof}
We can follow the proof of \cite[Proposition~5.2]{Sc2} very closely. The main difference for us is that, since $\pi\mid X_{\Gamma_{\rm pa}(q)}$, the representation $\pi_{\infty}$ must be spherical. In particular, we automatically have $\pi_{\infty}=(\pi_{\psi}^+)_{\infty}$. More precisely, if $\pi \in \Pi_{\psi}$  is spherical, then $\mu_{\infty} = \chi\times \chi^{-1}$ (i.e. $\mu$ is of Maa\ss\  type). Looking at \cite[Table~2]{Sc2} we find that
\begin{equation}
	\pi_{\infty} = \chi\sigma 1_{{\rm GL}_2}\ltimes \chi^{-1}. \nonumber
\end{equation}
The associated real part of the spectral-parameter is $\Re \mu(\pi_{\infty}) = (\beta,\frac{1}{2})$, where $\vert\chi(x)\vert=\vert x \vert^{\beta}.$ Note that $\beta = \sigma(\mu_{\infty})$ and $\beta < \frac{1}{2}$. In particular we have $\sigma(\pi_{\infty}) = \frac{1}{2}$.

We continue as in the \cite{Sc2} by noting that, if there is a finite place $v$ where $\sigma$ is ramified, then no representation in $\Pi_{\psi_v}$ has vectors fixed by the paramodular subgroup. Thus, we can assume that $\sigma$ is trivial. In this case we see that only the local base point at finite places has vectors fixed by the paramodular group. In other words, $\dim V_{\Gamma_{\rm pa}(q)}(\pi) \neq 0$ implies that $\pi=\pi_{\psi}^+$. But $\langle -1,\pi_{\psi}^+\rangle=1$, so that the parity condition forces $\epsilon(\frac{1}{2},\mu) = 1$. Looking closer at \cite[Table~2]{Sc2} we find
\begin{itemize}
	\item If $\mu_v=\chi\times \chi^{-1}$, then $(\pi_{\psi}^+)_{v}$ is of type $\textrm{IIb}$. More precisely, in the notation of \cite{RS} we have $(\pi_{\psi}^+)_{v} = \chi \sigma_v 1_{\textrm{GL}_2}\ltimes \chi^{-1}$. Recall that $\sigma_v$ is unramified, so that $a(\sigma_v\chi)=a(\chi)$. Finally, note that $a(\mu_v) = 2a(\chi)$. In this case the $\sigma$ in \cite[Table~A.12]{RS} corresponds to our $\chi^{-1}$.
	\item If $\mu_v=\chi\textrm{St}_{\textrm{GL}_2}$, for $\chi\neq \sigma_v$ quadratic, then $(\pi_{\psi}^+)_{v}=L(\nu^{\frac{1}{2}}\chi\sigma_v\textrm{St}_{\textrm{GL}_2}, \nu^{-\frac{1}{2}}\sigma_v)$ is of type $\textrm{Vb}$. Here we have $a(\mu_v) = 2$. In this case the $\xi$ in \cite[Table~A.12]{RS} corresponds to our $\chi \sigma_v$.
	\item If $\mu_v=\sigma_v\textrm{St}_{\textrm{GL}_2}$, then $(\pi_{\psi}^+)_{v}=L(\nu^{\frac{1}{2}}\textrm{St}_{\textrm{GL}_2}, \nu^{-\frac{1}{2}}\sigma_v)$ is of type $\textrm{VIc}$. Since $\sigma_v$ is unramified, one has $a(\mu_v)=1$. In this case the $\sigma$ in \cite[Table~A.12]{RS} corresponds to our $\sigma_v$.
	\item If $\mu_v$ is supercuspidal then $(\pi_{\psi}^+)_{v}=L(\nu^{\frac{1}{2}}\sigma_v\nu_v, \nu^{-\frac{1}{2}}\sigma_v)$ is of type $\textrm{XIb}$. By unramifiedness of $\sigma_v$ we have $a(\sigma_v\mu_v)=a(\mu_v)$. In this case $\pi$ (resp. $\sigma$) in \cite[Table~A.12]{RS} corresponds to our $\sigma_v\mu_v$ (resp. $\sigma_v$).
\end{itemize}
Using these observations one can study \cite[Table~A.12]{RS} to obtain $$\dim (\pi_{\psi_v}^+)^{K_{\Gamma_{\rm pa}(q),v}} = \lfloor\frac{v(q)-a(\mu_v)}{2}\rfloor.$$ This completes the proof.
\end{proof}

\begin{remark}
A Similar result can be established for other lattice families as well. The most important ingredients to generalize the argument are the corresponding formulae for the dimensions of local fixed vectors. For example, if $\Gamma$ is a congruence subgroup associated to a parabolic (e.g. Siegel congruence subgroup, Klingen congruence subgroup or Borel-type congruence subgroup) with square-free level, then these can be found in \cite[Table~A.15]{RS}. For principal congruence subgroup the relevant local dimension formulae are given in \cite{Br}. 
\end{remark}

\section{The density theorem}

After gathering all the ingredients we are now ready to prove our density theorem. We do so in two steps. First, we use the Kuznetsov formula to establish a bound for the generic part of the discrete spectrum. Second, we account for the missing pieces by hand.

\subsection{The generic contribution}\label{sec:gen}

As mentioned in the introduction the generic spectrum can be treated using a Kuznetsov formula following the strategy pioneered in \cite{Bl}. The argument was adapted to Siegel congruence subgroups of ${\rm Sp}_4(\mathbb{Z})$ in \cite{Man}. We will start by discussing the overall strategy of this argument for a quite general class of lattices $\Gamma$. Later we will restrict our attention to the paramodular groups $\Gamma_{\rm pa}(q)$, where we can carry out all the details.

\begin{remark}\label{mistake}
Unfortunately it seems that the proof of the density theorem given in \cite{Man} has the following gap. As we will see below the non-vanishing of the first Fourier coefficient $A_{\varpi}(\mathbf{1})$ for generic Siegel-Maa\ss\  cusp forms is crucial to the approach using the Kuznetsov formula. For the Siegel congruence subgroup this is stated in \cite[(6.3)]{Man} and is supposed to follow from \cite[Theorem~1.1]{CI}. However, \cite{CI} works with the paramodular group in place of the Siegel congruence subgroup. It is not clear to us if and with how much work the methods form \cite{CI} adapt to generic Siegel-Maa\ss\  cusp forms for the Siegel congruence subgroup. An alternative would be to establish the desired estimate conditional on the Conjecture of Lapid-Mao and some extensive local computations. See Conjecture~\ref{conj}, \eqref{con_2} and the discussion below.
\end{remark}

It is easy to see that the density result follows from the weighted estimate
\begin{equation}
	\sum_{\substack{\pi \mid X_{\Gamma},\\ \text{generic}}} \dim_{\mathbb{C}}V_{\Gamma}(\pi)\cdot Z^{2\sigma_{\pi}} \ll \mathcal{V}(\Gamma)^{1+\epsilon},\label{Z-version}
\end{equation}
for sufficiently large $Z$. We call the value of $Z$ necessary to establish the density hypothesis $Z_0(\Gamma)$. It is given by $Z_0(\Gamma) = \mathcal{V}(\Gamma)^{\frac{1}{2\sigma_{\mathbf{1}}}}= \mathcal{V}(\Gamma)^{\frac{1}{3}}$. If the estimate holds for larger values of $Z$, then one obtains a subconvex density theorem. For example we have $Z_0(\Gamma_{\rm pa}(q)) = q^{\frac{2}{3}}$, since $\mathcal{V}(\Gamma_{\textrm{pa}}(q))=q^2$.

We will now recall the Kuznetsov formula from \cite[Section~7]{Man} and modify it slightly to apply to all lattices $\Gamma$ in question. Note that one can alternative use the relative trace formula developed in \cite{Co}. 

For the following argument we assume that $U(\mathbb{Z})\subseteq \Gamma$. Given a function $E \colon \mathbb{R}^2_+\to \mathbb{C}$ with compact support and a parameter $X\in \mathbb{R}^2_+$ we set
\begin{equation}
	E^{(X)}(y_1,y_2) = E(X_1y_1,X_2y_2).\nonumber
\end{equation}
This function is then lifted to a function $F^{(X)}$ on ${\rm Sp}_4(\mathbb{R})$ by
\begin{equation}
	F^{(X)}(xyk) = \boldsymbol{\psi}(x)E^{(X)}({\rm y}(y)) \text{ for }x\in U(\mathbb{R}),\, y\in T(\mathbb{R}_+) \text{ and }k\in K_{\infty}.\nonumber
\end{equation}
We are now ready to define the Poincar\'e series
\begin{equation}
	P_{\Gamma}^{(X)}(g) = \sum_{\gamma\in U(\mathbb{Z})\backslash \Gamma}F^{(X)}(\gamma g), \text{ for }g\in {\rm Sp}_4(\mathbb{R}).\nonumber
\end{equation}
The usual unfolding trick shows that
\begin{equation}
	\langle \varpi, P_{\Gamma}^{(X)}\rangle_{\Gamma} = A_{\varpi}(\mathbf{1})\langle W_{\mu},E^{(X)}\rangle.\nonumber
\end{equation}
On the other hand we can compute the Fourier coefficient of the Poincar\'e series directly using the Bruhat decomposition. One gets
\begin{equation}
	\int_{U(\mathbb{Z})\backslash U(\mathbb{R})}P^{(X)}_{\Gamma}(xg)\overline{\boldsymbol{\psi}_{\infty}(x)}dx = \sum_{w\in W}\sum_{c\in \mathbb{N}^2}{\rm KL}_{\Gamma,w}(c;1,1)\cdot \int_{U_w(\mathbb{R})}F^{(X)}(c^{\ast}wxg)\overline{\boldsymbol{\psi}_{\infty}(x)}dx.\nonumber
\end{equation}
Taking the inner product of two Poincar\'e series and applying Parseval gives the following lemma:

\begin{lemma}
Let $Z\in \mathbb{R}_+$ and let $E\colon \mathbb{R}_+^2\to \mathbb{C}$ be compactly supported. Then we have
\begin{multline}
	{\sum_{\substack{\pi\mid X_{\Gamma},\\ \text{generic}}}}'\sum_{\varpi\in \mathcal{O}_{\Gamma}(\pi)}\frac{\vert A_{\varpi}(\mathbf{1})\vert^2}{\langle \varpi,\varpi\rangle_{\Gamma}}\cdot Z^{2\sigma_{\pi}} \ll_M \sum_{w\in W}  \sum_{c\in \mathbb{N}^2}\frac{{\rm KL}_{\Gamma,w}(c;1,1)}{c_1c_2}\\ \cdot{\rm y}(A_c)^{-\eta}\int_{T(\mathbb{R}_+)} \int_{U_w(\mathbb{R})}F^{(X)}(\iota^{-1}(X)^{-1}A_cwxy)\boldsymbol{\psi}_{\infty}^{(X^{-1})}(x^{-1})\overline{E({\rm y}(y))}dxd^{\ast}y,\nonumber
\end{multline}
for $A_c = \iota(X)c^{\ast}w\iota(X)^{-1}w^{-1}\in T(\mathbb{R}_+)$ and $X=(1,Z)$.
\end{lemma}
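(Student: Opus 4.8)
The plan is to derive the stated inequality by pairing two Poincaré series and applying Parseval, exactly as sketched in the paragraph preceding the lemma, while carefully controlling the spectral weights. First I would expand
\[
\langle P_{\Gamma}^{(X)}, P_{\Gamma}^{(X)}\rangle
\]
in two ways. On the spectral side, decompose $P_{\Gamma}^{(X)}$ along an orthogonal basis of $L^2(X_{\Gamma})$ adapted to the representation-theoretic packaging from Section~3, i.e. $\{\varpi \in \mathcal{O}_{\Gamma}(\pi)\}$ as $\pi$ runs over automorphic representations with $\pi \mid X_{\Gamma}$. Since $F^{(X)}$ transforms under $U(\mathbb{R})$ by $\boldsymbol{\psi}_{\infty}$, only the generic $\pi$ survive the pairing, and the unfolding identity $\langle \varpi, P_{\Gamma}^{(X)}\rangle = A_{\varpi}(\mathbf{1})\langle W_{\mu_{\varpi}}, E^{(X)}\rangle$ gives
\[
\langle P_{\Gamma}^{(X)}, P_{\Gamma}^{(X)}\rangle = {\sum_{\substack{\pi\mid X_{\Gamma},\\ \text{generic}}}}'\sum_{\varpi\in \mathcal{O}_{\Gamma}(\pi)}\frac{\vert A_{\varpi}(\mathbf{1})\vert^2}{\langle\varpi,\varpi\rangle}\,\vert\langle W_{\mu_{\varpi}}, E^{(X)}\rangle\vert^2,
\]
the prime on the sum recording any convergence/truncation conventions (restricting to cusp forms plus whatever residual generic pieces occur). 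The point is that, for a well-chosen fixed bump function $E$ and for $\mu_{\varpi}$ in the range $\Vert\mu_{\varpi}\Vert\le M$, the rescaled inner product $\vert\langle W_{\mu_{\varpi}}, E^{(X)}\rangle\vert^2$ is bounded below by a constant times $Z^{2\sigma_{\varpi}}$ (up to $M$-dependent constants); this is where the parameter $X=(1,Z)$ enters and where the exponential growth of the Whittaker function $W_{\mu_{\varpi}}$ in the non-tempered directions is harvested. I would cite (or reprove following \cite{Man}) the requisite lower bound for $\langle W_{\mu},E^{(X)}\rangle$ as a function of $Z$ and $\sigma$.

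Next I would compute the same inner product geometrically. Using the Fourier expansion of $P_{\Gamma}^{(X)}$ along $U(\mathbb{Z})\backslash U(\mathbb{R})$ via the Bruhat decomposition — the displayed identity just above the lemma — and Parseval on $U(\mathbb{Z})\backslash U(\mathbb{R})$, the inner product becomes a sum over Weyl elements $w$ and moduli $c\in\mathbb{N}^2$ of $\mathrm{Kl}_{\Gamma,w}(c;1,1)$ times an archimedean integral of $F^{(X)}$ against itself transported by $c^\ast w$. Changing variables through the Iwasawa decomposition — writing the relevant torus translate as $A_c = \iota(X)c^\ast w\iota(X)^{-1}w^{-1}$ and absorbing the factor ${\rm y}(A_c)^{-1}$ coming from the measure $d^\ast y$ versus $du\,d^\ast y$, together with the $\frac{1}{c_1 c_2}$ coming from $\mathrm{Vol}(U_w(\mathbb{Z})\backslash U_w(\mathbb{R}))$-type normalizations and the definition of $c^\ast$ — yields precisely the claimed right-hand side
\[
\sum_{w\in W}\sum_{c\in\mathbb{N}^2}\frac{\mathrm{Kl}_{\Gamma,w}(c;1,1)}{c_1c_2}\,{\rm y}(A_c)^{-1}\int_{T(\mathbb{R}_+)}\int_{U_w(\mathbb{R})}F^{(X)}\!\big(\iota^{-1}(X)^{-1}A_c w x y\big)\boldsymbol{\psi}_{\infty}^{(X^{-1})}(x^{-1})\overline{E({\rm y}(y))}\,dx\,d^\ast y .
\]
Combining the spectral lower bound with the geometric identity, and noting $\vert A_{\varpi}(\mathbf{1})\vert^2/\langle\varpi,\varpi\rangle$ reappears on the left, produces the inequality of the lemma.

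The main obstacle, and the step deserving the most care, is the bookkeeping of measures, volumes, and normalizing constants so that the factors $c_1 c_2$, ${\rm y}(A_c)$, and the shift by $\iota(X)$ come out exactly as stated — in particular matching the convention that $F^{(X)}$ is built from $E^{(X)}(y_1,y_2)=E(X_1 y_1, X_2 y_2)$ with $X=(1,Z)$, and tracking how the Kloosterman term ${\rm Kl}_{\Gamma,w}(c;1,1)$ (which is local in nature, per the factorization recalled earlier) interacts with the adelic-to-classical passage for the non-standard lattice $\Gamma_{\rm pa}(q)$, where $U(\mathbb{Z})\subseteq\Gamma$ but $\mathcal{N}(\Gamma)=q\ne 1$. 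A secondary but genuine point is justifying absolute convergence of the geometric sum (so that Parseval and the interchange of sum and integral are legitimate), which uses the compact support of $E$ to restrict $c$ to a finite range for each $w$, together with the trivial bound $\vert\mathrm{Kl}_{\Gamma,w}(c;1,1)\vert\le \sharp X_\Gamma(c^\ast w)$ recalled above. Everything else — the unfolding identity, the Bruhat-cell computation of the Fourier coefficient, and the spectral expansion — is quoted from \cite[Section~7]{Man} with only cosmetic changes to accommodate the paramodular level.
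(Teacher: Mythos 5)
Your proposal is correct and follows essentially the same route as the paper, which simply invokes (a paramodular version of) the pre-Kuznetsov formula \cite[Lemma~7.1]{Man} together with the Whittaker lower bound \cite[Lemma~5.1]{Man} and drops the continuous spectrum; you merely spell out the internal pairing/unfolding/Bruhat--Parseval argument instead of quoting it. One cosmetic correction: your displayed spectral identity for $\langle P_{\Gamma}^{(X)},P_{\Gamma}^{(X)}\rangle$ should be a lower bound ($\geq$ the discrete generic contribution with $\Vert\mu_{\varpi}\Vert\leq M$), since the omitted Eisenstein and remaining spectral terms are nonnegative and are discarded by positivity --- exactly the ``dropping the continuous spectrum'' step in the paper.
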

\begin{proof}
This follows directly from a version of \cite[Lemma~7.1]{Man} for $\Gamma$ after inserting \cite[Lemma~5.2]{Man} and dropping the continuous spectrum. Note that in \cite[Lemma~7.1]{Man} the factor $X^{2\eta} = Z^{2\eta_2}$ appears on the geometric side. However, the same factor enters the spectral side via the estimate \cite[Lemma~5.2]{Man} and the contributions cancel out.
\end{proof}

The geometric side can further be brought in the following form
\begin{lemma}\label{lm:geo_side}
For $Z\in \mathbb{R}_+$ we have
\begin{equation}
	{\sum_{\substack{\pi\mid X_{\Gamma},\\ \text{generic}}}}'\sum_{\varpi\in \mathcal{O}_{\Gamma}(\pi)}\frac{\vert A_{\varpi}(\mathbf{1})\vert^2}{\langle \varpi,\varpi\rangle}\cdot Z^{2\sigma_{\pi}} \ll_{M,\epsilon} \mathcal{N}(\Gamma) + Z^{\epsilon}\sum_{\substack{1\neq w\in W,\\ \text{relevant}}}S_{\Gamma}(w;Z), \nonumber
\end{equation}
for
\begin{align}
	S_{\Gamma}(s_1s_2s_1;Z) &= \sum_{\substack{c=(c_1,c_1)\in\mathbb{N}^2,\\ c_1\ll Z}}\frac{{\rm KL}_{\Gamma,w}(c;1,1)}{c_1^2},\nonumber\\ 
	S_{\Gamma}(s_2s_1s_2;Z) &= \sum_{\substack{c=(c_1,c_1^2)\in\mathbb{N}^2,\\ c_1\ll Z}}\frac{{\rm KL}_{\Gamma,w}(c;1,1)}{c_1^3}\text{ and }\nonumber\\ 
	S_{\Gamma}(s_1s_2s_1s_2;Z) &= \sum_{\substack{c=(c_1,c_2)\in \mathbb{N}^2,\\ c_1\ll Z,\\ c_2\ll Z^2}}\frac{{\rm KL}_{\Gamma,w}(c;1,1)}{c_1c_2}.\label{eq:def_long}
\end{align}
\end{lemma}
\begin{proof}
First, we note that for $(w,c)$ non-admissible the corresponding Kloosterman sum vanishes. Thus we drop those tuples from the geometric side. Furthermore, the contribution of $w=1$ is given by \eqref{eq:triv_KS}.

We consider the remaining cases. Let $w\in W$ be a non-trivial relevant Weyl element. The corresponding part of the geometric side of the Kuznetsov formula is estimated as follows. We first apply \cite[Lemma~3.1]{Man} as in \cite[Section~8]{Man} to truncate the $c$-sum. The result is the contribution
\begin{equation}
	\sum_{\substack{c\in \mathbb{N}^2,\\ c_1\ll Z,\\ c_2\ll Z^{\kappa(w)}}}\frac{{\rm KL}_{\Gamma,w}(c;1,1)}{c_1c_2} \cdot{\rm y}(A_c)^{-\eta}\int_{T(\mathbb{R}_+)} \int_{U_w(\mathbb{R})}F^{(X)}(\iota^{-1}(X)^{-1}A_cwxy)\boldsymbol{\psi}_{\infty}^{(X^{-1})}(x^{-1})\overline{E({\rm y}(y))}dxd^{\ast}y, \label{to_est}
\end{equation}
for $\kappa(s_1s_2s_1) = 1$ and $\kappa(s_2s_1s_2) = \kappa(s_1s_2s_1s_2) = 2.$ Now one uses \cite[Lemma~3.1 and Lemma~3.3]{Man} to estimate the orbital integrals as in \cite[Section~8]{Man}.\footnote{Note that \cite[Lemma~3.3]{Man} is stated only for $w=s_2s_1s_2$. However the statement remains true in general. For ${\rm Sp}_4$ this can be checked by hand in the remaining two cases. Otherwise the argument from \cite{A} applies here as well.} This gives 
\begin{equation}
	\left\vert\int_{T(\mathbb{R}_+)} \int_{U_w(\mathbb{R})}F^{(X)}(\iota^{-1}(X)^{-1}A_cwxy)\boldsymbol{\psi}_{\infty}^{(X^{-1})}(x^{-1})\overline{E({\rm y}(y))}dxd^{\ast}y\right\vert \ll_{E,\epsilon} {\rm y}(A_c)^{\eta(1+\epsilon)} \nonumber
\end{equation}
as on the bottom of \cite[p. 2071]{Man}. Thus \eqref{to_est} is 
\begin{equation}	
	\ll_{M,\epsilon} Z^{\epsilon}\cdot \sum_{\substack{c\in \mathbb{N}^2,\\ c_1\ll Z,\\ c_2\ll Z^{\kappa(w)}}}\frac{{\rm KL}_{\Gamma,w}(c;1,1)}{c_1c_2}.\nonumber
\end{equation}
The desired result follows after inserting the admissibility constraints for $c$ as recorded in Table~\ref{tab:my_label}.
\end{proof}

For the paramodular group $\Gamma=\Gamma_{\rm pa}(q)$ we can use the computations from Section~\ref{sec_KS} to deduce explicit estimates for the geometric side:
\begin{lemma}\label{geo_para}
We have
\begin{equation}
	S_{\Gamma_{\rm pa}(q)}(s_1s_2s_1;Z) = S_{\Gamma_{\rm pa}(q)}(s_2s_1s_2;Z)=S_{\Gamma_{\rm pa}(q)}(s_1s_2s_1s_2;Z) = 0, \nonumber
\end{equation}
for $Z\ll q^{1-\epsilon}$. Furthermore, if $q$ is prime and $Z\ll q^{2-\epsilon}$, then we have
\begin{align}
	S_{\Gamma_{\rm pa}(q)}(s_1s_2s_1;Z) &\ll q,\nonumber \\
	S_{\Gamma_{\rm pa}(q)}(s_2s_1s_2;Z) &= 0 \text{ and }\nonumber \\
	S_{\Gamma_{\rm pa}(q)}(s_1s_2s_1s_2;Z) &\ll \frac{Z^2}{q^{1+\frac{1}{4}}}.\nonumber 
\end{align}
\end{lemma}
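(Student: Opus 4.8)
The plan is to insert the Kloosterman sum estimates from Lemma~\ref{para_ram_KS} directly into the definitions of $S_{\Gamma}(w)$ given in Lemma~\ref{lm:geo_side}, exploiting the admissibility constraints for $c$.

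\textbf{Step 1: the range $Z\ll q$.} Here I would observe that for $\Gamma=\Gamma_{\rm pa}(q)$ the first part of Lemma~\ref{para_ram_KS} says that ${\rm Kl}_{\Gamma_{\rm pa}(q),w}(c;1,N)=0$ for all three relevant non-trivial Weyl elements unless $q\mid c_1$ (the moduli appearing are $(a,a)$, $(a,a^2)$ and $(a,b)$, each of which forces $q\mid a=c_1$). But in $S_{\Gamma}(w)$ the summation is over $c_1\ll Z\ll q$, so the only possible contribution would be $c_1=q$ with $Z\asymp q$; for $Z$ strictly of smaller order than $q$ there is no term at all, and by choosing the implied constant appropriately we get $S_{\Gamma_{\rm pa}(q)}(w)=0$ in this range. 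This gives the first display.

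\textbf{Step 2: the range $Z\ll q^2$ with $q$ prime.} Now $c_1$ runs over $1,q,q^2$ (only powers of $q$ survive since $q\mid c_1$, and $c_1\ll q^2$), and for each relevant $w$ the vanishing part of Lemma~\ref{para_ram_KS} further pins down $c_2$ in terms of $c_1$ via the admissibility table: for $w=s_1s_2s_1$ one needs $c_1=c_2$, for $w=s_2s_1s_2$ one needs $c_2=c_1^2$, and for $w=s_1s_2s_1s_2$ one has $c_2=q^k$ with $q\mid c_1\mid c_2$. For $w=s_1s_2s_1$: the only surviving modulus in range is $c=(q,q)$ (and possibly $c=(q^2,q^2)$ if $Z\asymp q^2$, but that needs the $c=(q^2,q^2)$ Kloosterman sum, which is not covered — so one restricts to $Z\ll q^2$ with a small enough constant, or one notes $q^2\not\ll Z$); inserting ${\rm KL}_{\Gamma_{\rm pa}(q),s_1s_2s_1}((q,q),1,1)=q^2$ and dividing by $c_1^2=q^2$ gives $O(1)=O(\mathcal{N}(\Gamma))$ since $\mathcal{N}(\Gamma)=q\geq 1$. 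For $w=s_2s_1s_2$: Lemma~\ref{para_ram_KS} gives ${\rm KL}_{\Gamma_{\rm pa}(q),s_2s_1s_2}((q,q^2),1,1)=0$, so the whole sum vanishes. For $w=s_1s_2s_1s_2$: the surviving moduli are $c=(q,q^k)$ with $q^k\ll Z^2\ll q^4$, i.e. $k\in\{1,2,3,4\}$ (again with care at the endpoint), and ${\rm KL}_{\Gamma_{\rm pa}(q),s_1s_2s_1s_2}((q,q^k),1,1)\ll \tau(q^k)q^{2+\frac{k-1}{2}}$; dividing by $c_1c_2=q^{1+k}$ and summing over $k$ gives $\ll \sum_{k}\tau(q^k)q^{1-\frac{k+1}{2}}\ll q^{1}\cdot\max_k q^{-\frac{k+1}{2}+\epsilon}$. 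The term $k=1$ already gives $q^{1-1+\epsilon}=q^{\epsilon}$, but to see the shape $\mathcal{N}(\Gamma)(1+Z^2q^{-2-\frac14})$ claimed one tracks the largest $k$, i.e. $q^k\asymp Z^2$, giving a term $\asymp q\cdot \tau(q^k) (q^k)^{-\frac12}\cdot q^{-\frac12} \asymp \mathcal{N}(\Gamma)\cdot Z^{-1}q^{-1/2+\epsilon}$ — so I would re-examine the exact exponent bookkeeping against the $c_1=q^2$ contribution (which uses the $(q^2,\cdot)$ sums not computed here), and this is where the stated $q^{2+\frac14}$ denominator and the factor $Z^2$ must come from: presumably the sum is reorganized so that $c_2$ ranges freely up to $Z^2$, each term contributes $\ll \tau(c_2)\mathcal{N}(\Gamma) c_2^{-1}\cdot c_2^{1/2}q^{-1/2}\cdot(\text{normalization})$, and summing a divisor-weighted $\sum_{c_2\ll Z^2}c_2^{-1/2}\asymp Z$ produces the $Z^2$ after squaring conventions — I would double-check this normalization rather than assert it.

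\textbf{Main obstacle.} The routine part is Steps 1 and 2 for $s_1s_2s_1$ and $s_2s_1s_2$, where the Kloosterman sums are either zero or exactly $q^2$. The delicate point, and the one I would spend the most care on, is matching the $s_1s_2s_1s_2$ contribution to the precise bound $\mathcal{N}(\Gamma)(1+Z^2 q^{-2-1/4})$: this requires summing the normal-Kloosterman-sum bound $\tau(q^k)q^{2+(k-1)/2}$ against the weight $c_1^{-1}c_2^{-1}=q^{-1-k}$ over the full admissible range $q^k\ll Z^2$, keeping the $\epsilon$'s under control and correctly identifying where the $\frac14$ in the exponent originates (it should come from the $q^{(k-1)/2}$ saving against $q^{-k}$, i.e. an effective $q^{-k/2}$ decay, evaluated at the top of the range $q^{k}\asymp Z^2$, giving $Z^{2}\cdot q^{-2}\cdot q^{\text{correction}}$), together with honestly bounding or excluding the $c_1=q^2$ moduli which fall outside the range of Lemma~\ref{para_ram_KS}; restricting $Z$ by a sufficiently small constant multiple of $q^2$ handles the latter.
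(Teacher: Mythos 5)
There is a genuine gap in Step 2, and it stems from a misreading of Lemma~\ref{para_ram_KS}: that lemma only forces $q\mid c_1$ (and $c_1\mid c_2$ for the long element); it does \emph{not} say that only pure powers of $q$ survive. Even for prime $q$, the sums $S_{\Gamma}(w)$ contain all moduli $c_1=qm$, $c_2=qm\cdot(\ldots)$ with $(m,q)=1$ and $m\ll Z/q\ll q$, and your Step 2 discards all of these. The paper's proof handles them via the ramified--unramified factorization stated earlier in the text, ${\rm Kl}_{\Gamma,w}(c;\mathbf{1},\mathbf{1}) = {\rm Kl}_{\Gamma,w}(d;\mathbf{1},N')\,{\rm Kl}_{\Gamma_0,w}(c';\mathbf{1},N'')$ with $d\mid q^{\infty}$ and $(c',q)=1$: the ramified factor is evaluated by Lemma~\ref{para_ram_KS} and the unramified factor is bounded by Theorem~\ref{unram_KS}. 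Concretely, for $w=s_1s_2s_1s_2$ one writes
\begin{equation}
	S_{\Gamma}(s_1s_2s_1s_2) \ll \sum_{i=1,2,3}\frac{\vert{\rm Kl}_{\Gamma,w}((q,q^i);1,N')\vert}{q^{i+1}}\sum_{\substack{c_1\ll Z/q,\ c_2\ll Z^2/q^i,\\ (c_1c_2,q)=1}}\frac{\vert{\rm Kl}_{\Gamma_0,w}(c;1,N'')\vert}{c_1c_2},\nonumber
\end{equation}
and inserting $c_1^{1/2+\epsilon}c_2^{3/4+\epsilon}(c_1,c_2)^{1/2}$ for the unramified sums gives, for the dominant $i=1$ term, $\mathcal{N}(\Gamma)\,q^{\epsilon-1}(Z/q)^{1/2}(Z^2/q)^{3/4}\asymp \mathcal{N}(\Gamma)Z^2q^{-9/4+\epsilon}$. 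This is exactly the origin of the exponent $2+\tfrac14$ that you could not locate; it comes from Man's unramified bound $c_2^{3/4}$, not from reorganizing the ramified $q^{(k-1)/2}$ factor, and your admitted uncertainty at this point is the crux of the lemma rather than a bookkeeping detail.

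The same omission affects the other two Weyl elements, though less fatally. For $w=s_2s_1s_2$ the vanishing for composite $c_1=qm$ again requires the factorization (the ramified factor ${\rm Kl}_{\Gamma_{\rm pa}(q),s_2s_1s_2}((q,q^2);1,N')=0$ kills every term), not just the single modulus $(q,q^2)$ you consider. For $w=s_1s_2s_1$ your restricted computation yields $O(1)$ but silently drops the moduli $(qm,qm)$, whose unramified contribution $\sum_{m\ll Z/q}m^{5/3+\epsilon-2}\ll (Z/q)^{2/3+\epsilon}$ is what the stated bound $\ll\mathcal{N}(\Gamma)$ actually absorbs. Step 1 ($Z\ll q$) is fine and matches the paper. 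To repair the proposal, import the factorization of Kloosterman sums into ramified and unramified parts and apply Theorem~\ref{unram_KS} to the unramified moduli; without that ingredient the claimed bounds cannot be reached from Lemma~\ref{para_ram_KS} alone.
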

\begin{proof}
If $Z\ll q^{1-\epsilon}$ everything follows from the definition of $S_{\Gamma_{\rm pa}(q)}(w;Z)$ together with Lemma~\ref{para_ram_KS}. Thus, we assume that $q$ is prime and that $Z\ll q^{2-\epsilon}$. This implies that $(c_1,q^{\infty}) =q$ for all moduli $c=(c_1,c_2)$ that appear in the contribution of relevant Weyl elements. With this simple observation the sums $S_{\Gamma_{\textrm{pa}}(q)}(\ast;Z)$ can be easily estimated. The case $S_{\Gamma_{\rm pa}(q)}(s_1s_2s_1s_2;Z)$ is most interesting. From the definition, see \eqref{eq:def_long}, and the factorization formula \eqref{fact_KS} we get
\begin{align}
	S_{\Gamma_{\rm pa}(q)}(s_1s_2s_1s_2;Z) &= \sum_{\substack{c=(c_1,c_2)\in \mathbb{N}^2,\\ c_1\ll Z,\\ c_2\ll Z^2}}\frac{{\rm KL}_{\Gamma_{\rm pa}(q),w}(c;1,1)}{c_1c_2} \nonumber \\
	&= \sum_{i=1,2,3} \frac{\vert {\rm KL}_{\Gamma_{\rm pa}(q),w}((q,q^i);1,N')\vert }{q^{i+1}}\sum_{\substack{c=(c_1,c_2)\in \mathbb{N}^2,\\ c_1\ll Z/q,\\ c_2\ll Z^2/q^i,\\ (c_1c_2,q)=1}}\frac{\vert{\rm KL}_{\Gamma_{\rm pa}(q),w}(c;1,N'')\vert}{c_1c_2}. \label{split_up}
\end{align}
Inserting Lemma~\ref{para_ram_KS} and Theorem~\ref{unram_KS} yields
\begin{equation}
	S_{\Gamma_{\rm pa}(q)}(s_1s_2s_1s_2;Z) \ll  \sum_{i=1,2,3}q^{1+\epsilon-\frac{i+1}{2}}\sum_{\substack{c=(c_1,c_2)\in \mathbb{N}^2,\\ c_1\ll Z/q,\\ c_2\ll Z^2/q^i,\\ (c_1c_2,q)=1}} \frac{(c_1,c_2)^{\frac{1}{2}}}{c_1^{\frac{1}{2}}c_2^{\frac{1}{4}}}.\nonumber
\end{equation}
Estimating the $c$-sum is now routine. Indeed, for $i\in\{1,2,3\}$ we have
\begin{equation}
	\sum_{\substack{c=(c_1,c_2)\in \mathbb{N}^2,\\ c_1\ll Z/q,\\ c_2\ll Z^2/q^i,\\ (c_1c_2,q)=1}} \frac{(c_1,c_2)^{\frac{1}{2}}}{c_1^{\frac{1}{2}}c_2^{\frac{1}{4}}} = \sum_{\substack{a\leq Z/q, \\ (a,q)=1}} a^{-\frac{1}{4}}\sum_{\substack{c_1\ll \frac{Z}{q},\\ a\mid c_1,\\ (c_1,q)=1}}c_1^{-\frac{1}{2}}\sum_{\substack{c_2\ll \frac{Z^2}{q^i},\\ a\mid c_2\\ (c_2/a,qc_1/a)=1}}c_2^{-\frac{1}{4}} \ll \frac{Z^2}{q^{\frac{1}{2}+\frac{3i}{4}}}.\nonumber
\end{equation}
The result follows, with the bottleneck being the contribution of $i=1$.
\end{proof}

To complement the geometric estimates above we need to handle the spectral side of the Kuznetsov formula. More precisely, we need to show that
\begin{equation}
	\sum_{\varpi\in V_{\Gamma}(\pi)}\vert A_{\varpi}(\mathbf{1})\vert^2 \nonumber
\end{equation}
is not to small. Establishing this estimate is not an easy task. Assuming Conjecture~\ref{conj} this reduces to a purely local computation, which can in principle be carried out for many families of congruence subgroups. However, these local computations can be rather involved.  Luckily, in the case of $\Gamma_{\rm pa}(q)$ with $q$ square-free, the desired result is available in \cite{CI}.

\begin{lemma}\label{lm:spec_side}
Let $q\in \mathbb{N}$ be square-free and let $\pi$ be a generic cuspidal automorphic representation. Then we have
\begin{equation}
	\mathcal{V}(\Gamma_{\rm pa}(q))^{\epsilon}\sum_{\varpi\in \mathcal{O}_{\Gamma_{\rm pa}(q)}(\pi)}\frac{\vert A_{\varpi}(\mathbf{1})\vert^2}{\langle \varpi,\varpi\rangle} \gg_{\epsilon}  \frac{\mathcal{N}(\Gamma_{\rm pa}(q))}{\mathcal{V}(\Gamma_{\rm pa}(q))}\cdot   \dim_{\mathbb{C}} V_{\Gamma_{\rm pa}(q)}(\pi). \label{goal_spec}
\end{equation}
\end{lemma}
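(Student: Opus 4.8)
The plan is to deduce \eqref{goal_spec} directly from Theorem~\ref{CI_th}, which gives an exact formula for $\frac{\vert A_{\varpi}(\mathbf{1})\vert^2}{\langle \varpi,\varpi\rangle}$ that is the \emph{same} for every $\varpi\in\mathcal{O}_{\Gamma}(\pi)$. Thus the left-hand sum collapses to $\dim_{\mathbb{C}}V_{\Gamma}(\pi)$ times this common value, and it remains to bound the arithmetic factor
\begin{equation}
	\frac{2^{5-c}}{\mathcal{V}(\Gamma)}\cdot \frac{\Lambda^{q}(2)\Lambda^{q}(4)}{\Lambda^{q}(1,\pi,{\rm Ad})}\cdot \prod_{p\mid q}\frac{p}{(1-p^{-2})^2}\nonumber
\end{equation}
from below by $\mathcal{V}(\Gamma)^{-1+\epsilon}\mathcal{N}(\Gamma) = \mathcal{V}(\Gamma)^{-1+\epsilon}q$. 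Since $\prod_{p\mid q}p = q$ for square-free $q$ and $\prod_{p\mid q}(1-p^{-2})^{-2}\geq 1$, the product over $p\mid q$ is $\geq q$, so the local factors already supply the required $\mathcal{N}(\Gamma)=q$.

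The remaining point is that the ratio $\Lambda^{q}(2)\Lambda^{q}(4)/\Lambda^{q}(1,\pi,{\rm Ad})$ is bounded below by $\mathcal{V}(\Gamma)^{-\epsilon}=q^{-\epsilon}$. Here I would argue as follows. First, $\Lambda^{q}(2)$ and $\Lambda^{q}(4)$ differ from the absolute constants $\Lambda(2)=\zeta(2)\Gamma_{\mathbb{R}}(2)$ and $\Lambda(4)=\zeta(4)\Gamma_{\mathbb{R}}(4)$ only by the finitely many Euler factors $\prod_{p\mid q}(1-p^{-2})$ and $\prod_{p\mid q}(1-p^{-4})$, each of which lies in a fixed interval bounded away from $0$ and $\infty$ (they are $\gg 1$ and $\ll 1$ uniformly in $q$); so $\Lambda^{q}(2)\Lambda^{q}(4)\asymp 1$. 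Second, the adjoint $L$-value $\Lambda^{q}(1,\pi,{\rm Ad})$ in the denominator must be bounded \emph{above} by $q^{\epsilon}$; this is the genuine analytic input. One obtains it from the transfer of $\pi$ to an isobaric automorphic representation of ${\rm GL}_4$ (or more precisely from the fact that ${\rm Ad}$ of the ${\rm GSp}_4$-parameter is a $10$-dimensional representation expressible via $\wedge^2$ and ${\rm Sym}^2$ of standard ${\rm GL}_4$-data), together with the convexity bound for automorphic $L$-functions on the edge of the critical strip: the completed adjoint $L$-function at $s=1$ is $\ll_{\epsilon} C(\pi,{\rm Ad})^{\epsilon}$, and the analytic conductor $C(\pi,{\rm Ad})$ is polynomially bounded in $q$ and in the archimedean parameter $M$ (which is held fixed here, so it only contributes an $\epsilon$-free constant). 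Since this is an \emph{upper} bound for a positive quantity appearing in a denominator, it is exactly what is needed; a lower bound of the same type (which would require zero-free regions or Brun–Titchmarsh-type sieving) is \emph{not} needed, which is why the statement carries the harmless factor $\mathcal{V}(\Gamma)^{\epsilon}$ on the left.

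Putting these together: $\frac{\vert A_{\varpi}(\mathbf{1})\vert^2}{\langle \varpi,\varpi\rangle} \gg_{\epsilon} \mathcal{V}(\Gamma)^{-1}\cdot q^{-\epsilon}\cdot q = \mathcal{V}(\Gamma)^{-1-\epsilon}\mathcal{N}(\Gamma)$, and multiplying by $\dim_{\mathbb{C}}V_{\Gamma}(\pi)$ and moving the $\mathcal{V}(\Gamma)^{\epsilon}$ to the left yields \eqref{goal_spec}. The value of $c\in\{1,2\}$ only affects the constant $2^{5-c}\in\{8,16\}$, so it is irrelevant. I expect the only real obstacle to be locating a clean citable statement of the convexity bound $\Lambda(1,\pi,{\rm Ad})\ll_{\epsilon}(q(1+M))^{\epsilon}$ in the literature and checking that the adjoint $L$-function of the ${\rm GSp}_4$-representation $\pi$ is indeed a product (or isobaric piece) of standard automorphic $L$-functions on general linear groups to which such a bound applies — this uses the transfer results of \cite{Ar1, Ar2, GT}. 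Everything else is bookkeeping with finitely many local Euler factors that are uniformly of size $\asymp 1$.
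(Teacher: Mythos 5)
Your proposal is correct and follows essentially the same route as the paper: the paper also combines Theorem~\ref{CI_th} with an upper bound for the (adjoint) $L$-value at the edge of the critical strip, citing \cite{Li} for exactly the estimate $\Lambda^{q}(1,\pi,{\rm Ad})\ll_{\epsilon} q^{\epsilon}$ you flagged as the one genuine analytic input, together with the observation $\dim V_{\Gamma}(\pi)\ll 1$ from \cite[Table~A.12]{RS}. The reference \cite{Li} is the citable statement you were looking for.
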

\begin{proof}
Recall from \eqref{eq:N_for_para} that $\mathcal{N}(\Gamma_{\rm pa}(q))=q$. Further, we observe that $\dim V_{\Gamma_{\rm pa}(q)}(\pi) \ll \mathcal{V}(\Gamma_{\rm pa}(q))^{\epsilon}$. (The latter can be easily extracted from \cite[Table~A.12]{RS} and the local nature of $V_{\Gamma_{\rm pa}(q)}(\pi)$.) From Theorem~\ref{CI_th} we get
\begin{equation}
	\frac{\mathcal{N}(\Gamma_{\rm pa}(q))}{\mathcal{V}(\Gamma_{\rm pa}(q))}\cdot \frac{1}{\Lambda^q(1,\pi, \textrm{Ad})}\ll \sum_{\varpi\in \mathcal{O}_{\Gamma_{\rm pa}(q)}(\pi)}\frac{\vert A_{\varpi}(\mathbf{1})\vert^2}{\langle \varpi,\varpi\rangle}.
\end{equation}
We conclude by applying \cite[Theorem~2]{Li} to obtain an upper bound for $\Lambda^q(1,\pi, \textrm{Ad})$.
\end{proof}

Combining the results of this section gives the following theorem for the paramodular group:

\begin{theorem}\label{th:gen_ests}
Let $q$ be square-free. We have
\begin{equation}
	N_{\Gamma_{\rm pa}(q)}^{\rm gen}(\sigma;M) \ll_{M,\epsilon}\mathcal{V}(\Gamma_{\rm pa}(q))^{1-\frac{2\sigma}{3}(1+\delta)+\epsilon}.\label{dens_goal}
\end{equation}
for $\delta=\frac{1}{2}$. If $q$ is prime, then we have \eqref{dens_goal} with $\delta=\frac{11}{16}$. Furthermore, assuming Conjecture~\ref{conj} and \eqref{con_2} yields \eqref{dens_goal} for arbitrary $q$ with $\delta=\frac{1}{2}$.
\end{theorem}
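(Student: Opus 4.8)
The plan is to combine the (pre-)Kuznetsov inequality of Lemma~\ref{lm:geo_side}, the geometric estimates of Lemma~\ref{geo_para}, and the spectral lower bound of Lemma~\ref{lm:spec_side}, and then to optimise the free parameter $Z$ against the level. Starting from Lemma~\ref{lm:geo_side} I would restrict the generic spectral sum to those $\pi$ with $\Vert\mu(\pi_\infty)\Vert\le M$ (as picked out by the test function $E=E_M$) and $\sigma_\pi\ge\sigma$, discard the remaining terms by positivity, and bound $Z^{2\sigma_\pi}\ge Z^{2\sigma}$. Replacing $\sum_{\varpi\in\mathcal{O}_\Gamma(\pi)}|A_\varpi(\mathbf{1})|^2/\langle\varpi,\varpi\rangle$ by a constant times $\mathcal{N}(\Gamma)\mathcal{V}(\Gamma)^{-1-\epsilon}\dim_{\mathbb{C}}V_\Gamma(\pi)$ via Lemma~\ref{lm:spec_side} and summing over $\pi$ yields
\begin{equation}
  Z^{2\sigma}\,\frac{\mathcal{N}(\Gamma)}{\mathcal{V}(\Gamma)^{1+\epsilon}}\, N_\Gamma^{\rm gen}(\sigma;M) \ll_{M,\epsilon} \mathcal{N}(\Gamma) + \sum_{\substack{1\neq w\in W,\\ w\text{ relevant}}} S_\Gamma(w).\nonumber
\end{equation}
It then remains to choose $Z$ as large as Lemma~\ref{geo_para} allows while keeping the right-hand side $\ll_M\mathcal{N}(\Gamma)$, and to recall $\mathcal{N}(\Gamma_{\rm pa}(q))=q$ and $\mathcal{V}(\Gamma_{\rm pa}(q))\asymp q^2$.

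For square-free $q$, Lemma~\ref{geo_para} gives $S_\Gamma(s_1s_2s_1)=S_\Gamma(s_2s_1s_2)=S_\Gamma(s_1s_2s_1s_2)=0$ whenever $Z\ll q$, because the relevant ramified Kloosterman sums of Lemma~\ref{para_ram_KS} vanish unless $q\mid c_1$. Taking $Z\asymp q\asymp\mathcal{V}(\Gamma)^{1/2}$ collapses the geometric side to $\mathcal{N}(\Gamma)$, so $N_\Gamma^{\rm gen}(\sigma;M)\ll_{M,\epsilon}Z^{-2\sigma}\mathcal{V}(\Gamma)^{1+\epsilon}\asymp\mathcal{V}(\Gamma)^{1-\sigma+\epsilon}$; writing $1-\sigma=1-\tfrac{2\sigma}{3}\cdot\tfrac32$ this is \eqref{dens_goal} with $\delta_\Gamma=\tfrac12$. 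For prime $q$ the second part of Lemma~\ref{geo_para} is valid in the wider range $Z\ll q^2$: there $S_\Gamma(s_2s_1s_2)=0$, $S_\Gamma(s_1s_2s_1)\ll\mathcal{N}(\Gamma)$, and $S_\Gamma(s_1s_2s_1s_2)\ll\mathcal{N}(\Gamma)\bigl(1+Z^2q^{-9/4}\bigr)$, the last term being $\ll\mathcal{N}(\Gamma)$ exactly when $Z\ll q^{9/8}$. Choosing $Z\asymp q^{9/8}$ gives $N_\Gamma^{\rm gen}(\sigma;M)\ll_{M,\epsilon}q^{-9\sigma/4}\mathcal{V}(\Gamma)^{1+\epsilon}\asymp\mathcal{V}(\Gamma)^{1-\frac{9\sigma}{8}+\epsilon}$, and solving $\tfrac98=\tfrac23(1+\delta_\Gamma)$ gives $\delta_\Gamma=\tfrac{11}{16}$. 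For arbitrary $q$ one runs the square-free argument verbatim, the only change being that Lemma~\ref{lm:spec_side} is now invoked under Conjecture~\ref{conj} together with \eqref{con_2}; this again yields $\delta_\Gamma=\tfrac12$.

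There is no genuine obstacle left at this stage: the analytic difficulty has all been absorbed into the inputs, namely the ramified Kloosterman computation (Lemma~\ref{para_ram_KS}), the unramified bounds (Theorem~\ref{unram_KS}), and the lower bound on the first Fourier coefficient (Theorem~\ref{CI_th}), and the statement is their combination once $Z$ is balanced against the level in the three regimes. If I had to point to a delicate spot it would be purely formal: checking that the diagonal term $\mathcal{N}(\Gamma)$ is harmless after dividing through — it is, since $\mathcal{N}(\Gamma)$ cancels and leaves precisely the factor $\mathcal{V}(\Gamma)^{1+\epsilon}$ already present in \eqref{dens_goal} — and confirming that $E_M$ can be chosen, depending only on $M$, so that $\langle W_\mu,E_M^{(X)}\rangle\gg_M 1$ uniformly for $\Vert\mu\Vert\le M$, which is what legitimises restricting to the $M$-family on the spectral side.
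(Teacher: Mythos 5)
Your proposal is correct and follows essentially the same route as the paper: combining Lemma~\ref{lm:geo_side} with the spectral lower bound of Lemma~\ref{lm:spec_side} (Theorem~\ref{CI_th} for square-free $q$, Conjecture~\ref{conj} with \eqref{con_2} in general), then inserting Lemma~\ref{geo_para} and choosing $Z\asymp q$ respectively $Z\asymp q^{9/8}$, exactly as in the paper's optimisation $\delta_{\Gamma}=3\alpha-1$ with $Z\asymp\mathcal{V}(\Gamma)^{\alpha}$. The exponent bookkeeping ($\delta_{\Gamma}=\tfrac12$ and $\tfrac{11}{16}$) matches the paper's values.
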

\begin{proof}
We first observe that, if \eqref{Z-version} holds with $Z\asymp\mathcal{V}(\Gamma_{\rm pa}(q))^{\alpha-\epsilon}$, then we obtain \eqref{dens_goal} with $\delta = 3\alpha-1$. In particular, we need $\alpha\geq \frac{1}{3}$ to achieve the density hypothesis. We continue by combining Lemma~\ref{lm:geo_side} and Lemma~\ref{lm:spec_side} to obtain
\begin{equation}
	{\sum_{\substack{\pi\mid X_{\Gamma_{\rm pa}(q)},\\ \text{generic}}}}\dim V_{\Gamma_{\rm pa}(q)}(\pi)\cdot Z^{2\sigma_{\pi}} \ll_M \mathcal{V}(\Gamma_{\rm pa}(q))^{1+\epsilon} + \frac{\mathcal{V}(\Gamma_{\rm pa}(q))^{1+\epsilon}}{\mathcal{N}(\Gamma_{\rm pa}(q))}\sum_{\substack{1\neq w\in W,\\ \text{relevant}}}S_{\Gamma_{\rm pa}(q)}(w;Z) .\nonumber
\end{equation}
This holds for $q$ square-free. In general we need to assume Conjecture~\ref{conj} and \eqref{con_2} in order to replace Lemma~\ref{lm:spec_side}. We conclude the proof by using Lemma~\ref{geo_para} to estimate $S_{\Gamma_{\rm pa}(q)}(w;Z)$. Let us briefly explain this. First, if $\alpha=\frac{1}{2}$ (i.e. $Z=q$), then $S_{\Gamma_{\rm pa}(q)}(w;Z)=0$ for all admissible $w\neq 1$. (This applies to all $q$ not just square-free ones.) Second, if $q$ is prime, then we can take $\alpha=\frac{9}{16}$ (i.e. $Z=q^{1+\frac{1}{8}}$).
\end{proof}

\begin{remark}
We expect versions of Theorem~\ref{th:gen_ests} to hold in great generality. Indeed the main global tool, which is the Kuznetsov formula, is very flexible. Thus one is left with the problem of estimating the first Fourier coefficient on the spectral side and the ramified Kloosterman sums on the geometric side. The spectral problem can, at least conditionally on Conjecture~\ref{conj}, be reduced to the purely local problem of establishing \eqref{con_2}. The geometric problem also reduces to a local problem. Namely to a suitable estimate for the ramified Kloosterman sets. We believe that for \eqref{dens_goal} with $\delta=0$ only very weak bounds for the ramified Kloosterman sets suffice. 
\end{remark}

\subsection{The CAP and residual contribution}

We will first decompose the counting function $N_{\Gamma}(\sigma;M)$ as
\begin{equation}
N_{\Gamma}(\sigma;M) = \sum_{\ast \in \{ \mathbf{G}, \mathbf{Y}, \mathbf{Q}, \mathbf{P}, \mathbf{B}, \mathbf{F}\}} N_{\Gamma}^{\mathbf{(}\boldsymbol{\ast}\mathbf{)}}(\sigma;M), \nonumber
\end{equation}
where
\begin{equation}
N_{\Gamma}^{\mathbf{(}\boldsymbol{\ast}\mathbf{)}}(\sigma;M) = \sum_{\psi \text{ of type }(\ast)}\sum_{\substack{\pi\in \Pi_{\psi},\\ \langle \cdot,\pi\rangle=\epsilon_{\psi}}}\dim V_{\Gamma}(\pi).\nonumber
\end{equation}
Note that $\dim V_{\Gamma}(\pi) = 0$ unless $\pi\mid X_{\Gamma}$.

\begin{theorem}\label{th_CAP_type_B}
Let $q>1$ be arbitrary. The contribution of CAP-representations of type \textbf{(B)} and \textbf{(Q)} to the spectrum of $X_{\Gamma_{\rm pa}(q)}$ is 
\begin{equation}
	N_{\Gamma_{\rm pa}(q)}^{\mathbf{(B)}}(\sigma;M) =N_{\Gamma_{\rm pa}(q)}^{\mathbf{(Q)}}(\sigma;M)= 0. \nonumber
\end{equation}
Furthermore, the contribution of CAP-representations of type \textbf{(P)} is 
\begin{equation}
	N_{\Gamma_{\rm pa}(q)}^{\mathbf{(P)}}(\sigma;M) \ll q^{1+\epsilon}. \label{ess_shar}
\end{equation}
\end{theorem}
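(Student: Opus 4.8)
The plan is to treat each type separately, using the local Arthur packet analysis already recorded in Lemma~\ref{lm:pack_B}, Lemma~\ref{lm:pack_Q} and Lemma~\ref{lm:pack_P}, and to combine them with a count of the relevant global parameters.

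For types \textbf{(B)} and \textbf{(Q)}, the strategy is to observe that $\pi \mid X_{\Gamma}$ forces $\pi_{\infty}$ to be spherical, hence $\langle \cdot, \pi_{\infty}\rangle = 1$, and that for every member $\pi \in \Pi_{\psi}$ with $\pi \mid X_{\Gamma}$ we must have $\langle \cdot, \pi_p \rangle = 1$ at all finite $p$ as well (a paramodular vector survives only for the unramified-type members, cf. the proofs of Lemma~\ref{lm:pack_B} and Lemma~\ref{lm:pack_Q}, which exploit a place where $\xi$ ramifies). Thus $\langle \cdot, \pi \rangle = 1$ for every $\pi \mid X_{\Gamma}$ in these packets. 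But Lemma~\ref{lm:pack_B} and Lemma~\ref{lm:pack_Q} say precisely that $V_{\Gamma}(\pi) = \{0\}$ for all such $\pi$ with $\langle \cdot, \pi\rangle = 1$. Hence every summand in $N_{\Gamma}^{\mathbf{(B)}}(\sigma;M)$ and $N_{\Gamma}^{\mathbf{(Q)}}(\sigma;M)$ vanishes, giving the two claimed equalities without any parameter count being needed.

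For type \textbf{(P)} the plan is quantitative. By Lemma~\ref{lm:pack_P}, for a parameter $\psi = (\mu \boxtimes 1) \boxplus (\sigma \boxtimes \nu(2))$ the inner sum over the packet equals $\delta_{\epsilon(1/2,\mu)=1}\dim V_{\Gamma}(\pi_{\psi}^{+})$, and this is nonzero only when $\sigma$ is trivial and the conductor of $\mu$ divides $q$; moreover $\dim V_{\Gamma}(\pi_{\psi}^{+}) \leq \dim V_{\Gamma_0(q)}(\mu) \ll q^{\epsilon}$. So one is reduced to counting the cuspidal $\mu$ on $\mathrm{GL}_2$ of conductor dividing $q$ and trivial central character (forced by $\sigma^2 = \omega_{\mu}$ with $\sigma$ trivial) whose archimedean component is spherical (Maaß type) with bounded spectral parameter, the bound coming from $\|\mu(\pi_{\infty})\| \leq M$. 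This is a classical count: the number of such Maaß forms for $\Gamma_0(q)$ with Laplace eigenvalue in a fixed window is $\ll q^{1+\epsilon}$ by Weyl's law for $\mathrm{GL}_2$ (the main term is proportional to $[\mathrm{SL}_2(\mathbb{Z}):\Gamma_0(q)] \asymp q^{1+o(1)}$, absorbing divisors and oldform multiplicity into $q^{\epsilon}$). Multiplying the $\ll q^{\epsilon}$ count of each $V_{\Gamma}(\pi_{\psi}^{+})$ by the $\ll q^{1+\epsilon}$ number of admissible $\mu$ yields $N_{\Gamma}^{\mathbf{(P)}}(\sigma;M) \ll q^{1+\epsilon}$, which is \eqref{ess_shar}.

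The only genuinely nontrivial input is the $\mathrm{GL}_2$ counting bound in the last step, but this is entirely standard (Weyl's law, or even the cruder bound that the dimension of the space of Maaß cusp forms for $\Gamma_0(q)$ below a fixed eigenvalue grows like the index), so I would expect the main ``obstacle'' to be purely bookkeeping: being careful that the triviality of $\sigma$ at all places (including $\infty$, where $\xi_\infty = 1$ is needed for a spherical vector) is correctly propagated, that the condition $\omega_\mu = \sigma^2 = 1$ is imposed, and that the oldform contribution from conductors properly dividing $q$ only costs $q^{\epsilon}$. I would present the \textbf{(B)} and \textbf{(Q)} vanishing first (one line each, citing the lemmas), then spend the bulk of the argument on \textbf{(P)}.
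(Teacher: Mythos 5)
Your proposal is correct and follows essentially the same route as the paper: the \textbf{(B)} and \textbf{(Q)} vanishing is read off from Lemma~\ref{lm:pack_B} and Lemma~\ref{lm:pack_Q}, and the \textbf{(P)} bound comes from Lemma~\ref{lm:pack_P} reducing the count to cuspidal ${\rm GL}_2$ representations of trivial central character and conductor dividing $q$ with bounded spectral parameter, which the Weyl law bounds by $q^{1+\epsilon}$. The extra care you take with the character condition $\langle\cdot,\pi\rangle=\epsilon_{\psi}$ in the \textbf{(B)}/\textbf{(Q)} case is harmless but not needed beyond what the lemmas already provide.
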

\begin{proof}
The first part follows directly from Lemma~\ref{lm:pack_B_and_Q}. We turn towards the type \textbf{(P)} contribution. In this case an application of Lemma~\ref{lm:pack_P} yields
\begin{equation}
N_{\Gamma}^{\mathbf{(P)}}(\sigma;M) \ll q^{\epsilon} \sharp\{ \psi = (\mu\boxtimes 1)\boxplus (\mathbf{1}\boxtimes\nu(2))\in \boldsymbol{\psi}_2(G,\mathbf{1})\colon c(\mu)\mid q\}.\nonumber
\end{equation}
for $\sigma\leq \frac{1}{2}$. Thus we are essentially counting cuspidal automorphic representations $\mu$ of ${\rm GL}_2$ with trivial central character, conductor dividing $q$ (and bounded spectral parameter). The result follows from the appropriate Weyl law, see for example \cite[Theorem~1.1]{Do}.
\end{proof}

\subsection{The endgame}\label{sec_endgame}

We are finally ready to put all the pieces together and assemble the final density theorem.

\begin{theorem}\label{th:detail_main}
For square-free $q$, we have the density result
\begin{equation}
	N_{\Gamma_{\rm pa}(q)}(\sigma;M) \ll_{M,\epsilon} \mathcal{V}(\Gamma_{\rm pa}(q))^{1-\frac{2}{3}\sigma(1+\frac{1}{2})+\epsilon}+1. \nonumber
\end{equation} 
More precisely, for each $\ast\in\{\mathbf{G}, \mathbf{Y}, \mathbf{B}, \mathbf{P},\mathbf{Q}, \mathbf{F}\}$ there is $0\leq \sigma(\ast)\leq \frac{3}{2}$ such that $N_{ \Gamma_{\rm pa}(q)}^{\mathbf{(\ast)}}(\sigma;M)=0$ for $\sigma>\sigma(\ast)$ and for $\sigma \leq \sigma(\ast)$ we have good estimates of the form $$N_{\Gamma_{\rm pa}(q)}(\sigma;M) \ll_{M,\epsilon}\mathcal{V}(\Gamma_{\rm pa}(q))^{C_{(\ast)}(\sigma,\epsilon)}.$$ We summarize the values for $\sigma(\ast)$ and $C_{(\ast)}(\sigma,\epsilon)$ in the following table:
\begin{center}
{\tabulinesep=1.2mm
\begin{tabu}{| c | c | c | c |}
	\hline
	$\ast$ & Condition on $q$ & $\sigma(\ast)$ & $C_{(\ast)}(\sigma,\epsilon)+\epsilon$ \\
	\hline
	\textbf{G}  & square-free & $\frac{9}{22}$ &  $1-\frac{2}{3}\sigma(1+\frac{1}{2})+\epsilon$ \\
	\hline
	\textbf{G}  & prime & $\frac{9}{22}$ & $1-\frac{2}{3}\sigma(1+\frac{11}{16})+\epsilon$\\
	\hline
	\textbf{Y}  & square-free & $\frac{7}{64}$ & $1-\frac{2}{3}\sigma(1+\frac{1}{2})\epsilon$ \\
	\hline
	\textbf{Y}  & prime  & $\frac{7}{64}$ & $1-\frac{2}{3}\sigma(1+\frac{11}{16})+\epsilon$ \\
	\hline
	\textbf{B}  & none & $0$ & - \\
	\hline
	\textbf{P}  & none & $\frac{1}{2}$ & $1+\epsilon$ \\
	\hline
	\textbf{Q}  & none & $0$ & - \\
	\hline
	\textbf{F} & none & $\frac{3}{2}$ & $0$ \\
	\hline
\end{tabu}}
\end{center}
\end{theorem}
\begin{proof}
We start by observing that, by Lemma~\ref{tempered_pack} (in particular \eqref{eq:in_proof}), we have
\begin{equation}
	N_{\Gamma_{\rm pa}(q)}^{\mathbf{(G)}}(\sigma;M) + N_{\Gamma_{\rm pa}(q)}^{\mathbf{(Y)}}(\sigma;M) = N^{\rm gen}_{\Gamma_{\rm pa}(q)}(\sigma;M).\nonumber
\end{equation}
The desired bound follows from Theorem~\ref{th:gen_ests}. The vanishing results can be deduced from absolute bounds towards the Ramanujan Conjecture for ${\rm GL}_4$ and are given in \cite[Corollary~3.4]{AS}. The results for the CAP-contributions are provided in Theorem~\ref{th_CAP_type_B}.
\end{proof}

This result directly implies Theorem~\ref{main_theorem} (and more).\\

\textbf{Acknowledgement:} I would like to thank V. Blomer for his encouragement while working on the project. Further I would like to thank R. Schmidt for patiently answering my questions related to Arthur parameters for ${\rm GSp}_4$. We also thank the anonymous referee for a very carefully reading the manuscript and for making many useful suggestions.

\end{document}